\titlespacing{\section}{0pt}{12pt}{0pt}\titlespacing{\subsection}{0pt}{6pt}{0pt}
\long\def\symbolfootnote[#1]#2{\begingroup
\def\thefootnote{\fnsymbol{footnote}}\footnote[#1]{#2}\endgroup}
\DeclareMathOperator{\arccosh}{arccosh}
\DeclareMathOperator{\arcsinh}{arcsinh}
\newcommand\smallO{
  \mathchoice
    {{\scriptstyle\mathcal{O}}}
    {{\scriptstyle\mathcal{O}}}
    {{\scriptscriptstyle\mathcal{O}}}
    {\scalebox{.7}{$\scriptscriptstyle\mathcal{O}$}}
  }
\def\@cite#1#2{{\normalfont[{\bfseries#1\if@tempswa , #2\fi}]}}
\newcommand{\bi}{\begin{itemize}}
\newcommand{\ei}{\end{itemize}}
\newcommand{\bq}{\begin{que}}
\newcommand{\eq}{\end{que}}
\newtheorem*{thm1}{Theorem 1}
\newtheorem*{thmA}{Theorem A}
\newtheorem*{thmB}{Theorem B}
\newtheorem*{rk*}{Remark}
\newtheorem{que}{Question}
\newtheorem{thm}{Theorem}
\newtheorem{lem}{Lemma}
\newtheorem{cor}{Corollary}
\author{Nhat Minh Doan}
\begin{document}
\begin{center}{\Large \bfseries Geometric filling curves on punctured surfaces}\vspace{.2cm}\\
{\large Nhat Minh Doan\symbolfootnote[1]{Research supported by FNR PRIDE15/10949314/GSM. 
}}\end{center}
\allowdisplaybreaks
\begin{abstract}
This paper is about a type of quantitative density of closed geodesics and orthogeodesics on complete finite-area hyperbolic surfaces. The main results are upper bounds on the length of the shortest closed geodesic and the shortest doubly truncated orthogeodesic that are $\varepsilon$-dense on a given compact set on the surface.
\end{abstract}
\section{Introduction}
It is well known that if $X$ is a complete finite-area hyperbolic surface, the set of closed geodesics is dense on $X$ and on the unit tangent bundle of $X$ (see e.g. \cite{beardon2012geometry} or \cite{katok1995introduction}). A type of quantitative density of closed geodesics on closed hyperbolic surfaces was investigated by Basmajian, Parlier, and Souto in \cite{BPS16}. In particular, for any closed hyperbolic surface $X$ and any positive number $\varepsilon$, they found an upper bound on the length of the shortest closed geodesic that is $\varepsilon$-dense on $X$, by which it meant that all points of $X$ are at a distance at most $\varepsilon$ from the geodesic. This upper bound is recently used to estimate the complexity of an algorithm of tightening curves and graphs on surfaces \cite{chang2020tightening}.
Our goal is to extend their results to the case of complete finite-area hyperbolic surfaces in two directions. The first is that for any complete finite-area hyperbolic surface and any positive number $\varepsilon$ less than or equal to $2$, we are going to construct a closed geodesic $\gamma_\varepsilon$ so that $\gamma_\varepsilon$ is $\varepsilon$-dense on a given compact set of the surface and its length is bounded above by a quantity which depends on the geometry of $X$ and $\varepsilon$. The second is that we will construct a doubly truncated orthogeodesic that is $\varepsilon$-dense and also of bounded length. These types of orthogeodesics appear for instance in identities \cite{parlier2020geodesic} related to McShane's identity \cite{mcshane1998simple} and Basmajian's identity \cite{Bas93}.

Let us begin with a few necessary notations so that one can understand the statement of the main results. Let $\mathcal{M}_{g,n}$ be the moduli space of complete connected orientable finite area hyperbolic surfaces of genus $g$ and $n$ cusps. For any $X$ in $\mathcal{M}_{g,n}$ and any positive number $\xi \leq 2$, we define 
$X^\xi$ as a subset of $X$ such that each connected component of the boundary of $X^\xi$ is a horocycle of length $\xi$. A geodesic arc on $X$ is called a doubly truncated orthogeodesic on $X^\xi$ if it is perpendicular to the horocyclic boundary of $X^\xi$ at its endpoints. Our main results are the following.

\begin{thmA}\label{A}
For all $X\in \mathcal{M}_{g,n}$ there exists a constant $C_X>0$ such that for all $0<\xi \leq 1$ and all $0<\varepsilon \leq 2$ there exists a closed geodesic $\gamma_\varepsilon$ that is $\varepsilon$-dense on $X^{\xi}$ and such that 
\begin{center}
$\ell(\gamma_\varepsilon) \leq C_{X} \dfrac{1}{\varepsilon}\bigg(\log \dfrac{1}{\varepsilon}+\log \dfrac{1}{\xi}\bigg)$.
\end{center}
\end{thmA}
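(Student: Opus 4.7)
The plan is to mimic the Basmajian--Parlier--Souto approach in the cusped setting: decompose $X^\xi$ into a compact core and the cusp collars, build a piecewise geodesic closed curve $\sigma$ whose $\varepsilon$-neighborhood covers $X^\xi$ with length matching the target upper bound, and then take the geodesic representative $\gamma_\varepsilon$ of $\sigma$ in its free homotopy class. Since $\ell(\gamma_\varepsilon) \leq \ell(\sigma)$, the length bound on $\gamma_\varepsilon$ is automatic; the work is in (i) constructing $\sigma$ economically and (ii) controlling how straightening affects the $\varepsilon$-density.

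For the construction, I would decompose $X^\xi = K \cup \bigcup_{i=1}^n C_i$, where $K = X^1$ is the compact core (cusps truncated at horocycles of length $1$) and each $C_i$ is the cusp collar foliated by horocycles $H_h$ of length $h \in [\xi,1]$. On $K$ I would take a maximal $\varepsilon$-separated set $P$, which has cardinality $O_X(1/\varepsilon^2)$ by a standard hyperbolic volume comparison, connect nearby points of $P$ by short geodesic segments to form a connected graph, and apply a BPS-type argument to produce a closed piecewise-geodesic loop in $K$ that is $\varepsilon$-dense and of length $O_X(\log(1/\varepsilon)/\varepsilon)$. On each $C_i$, I would take $N = \lceil 1/(2\varepsilon)\rceil$ horocycle-perpendicular geodesic strands, equally spaced along $H_1$ and descending to $H_\xi$; a direct computation in horocyclic coordinates shows that their combined $\varepsilon$-neighborhoods cover $C_i$, and their total length is $N\log(1/\xi) = O(\log(1/\xi)/\varepsilon)$. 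Splicing the pieces together using short horocyclic arcs along each $H_1$ and closing up each cusp by a zig-zag among its $N$ strands, then taking an Eulerian doubling of the resulting connected graph, yields a closed piecewise-geodesic curve $\sigma$ with
\[
\ell(\sigma) \;=\; O_X\!\left(\tfrac{1}{\varepsilon}\bigl(\log(1/\varepsilon)+\log(1/\xi)\bigr)\right),
\]
and $\sigma$ is $\varepsilon$-dense on $X^\xi$ by construction.

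The hard part is guaranteeing that the geodesic representative $\gamma_\varepsilon$ of $\sigma$ stays $\varepsilon$-dense, since straightening could a priori retract it away from $H_\xi$ or from isolated pockets of $K$. The standard device is to arrange that at every breakpoint the turning angle of $\sigma$ is bounded below by a fixed $\alpha_0>0$; then every lift of $\sigma$ to $\mathbb{H}^2$ is a quasi-geodesic with constants depending only on $\alpha_0$ and on the maximum arc length between consecutive breakpoints, and stability of quasi-geodesics gives an $O(\varepsilon)$-fellow-traveler between $\sigma$ and its geodesic lift. Consequently $\varepsilon$-density is preserved on the quotient, up to a multiplicative constant that can be absorbed into $C_X$. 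The delicate step is enforcing the lower angle bound deep inside the cusps, where the horocyclic connectors shrink exponentially while the vertical strands remain of length $\log(1/\xi)$; I would handle this either by choosing the cusp zig-zag so that all its turns stay bounded away from $0$ and $\pi$, or by a direct argument exploiting the product-like structure of the cusp to show that any loop in the free homotopy class of a cusp zig-zag with nonzero winding around the cusp must have a geodesic representative that dips to within $O(\varepsilon)$ of $H_\xi$.
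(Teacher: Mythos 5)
Your proposal is a genuinely different route from the paper's: you build a closed piecewise-geodesic curve $\sigma$ on $X^\xi$ and straighten it, whereas the paper builds a collection of one-sided orthogeodesic arcs from the ideal vertices of a fundamental polygon to a $1/\varepsilon$-net of points on the boundary of the convex core $CH(F^2)$, and feeds that collection directly into its Theorem~1, which \emph{produces} a closed geodesic $\varepsilon$-close to every arc rather than straightening a given curve. The paper's Theorem~1 is exactly where the cusp difficulty is localized and resolved.

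The gap in your argument is in the straightening step, which you rightly flag as the hard part but do not actually close. Quasi-geodesic stability with a turning-angle lower bound $\alpha_0$ and a bound on the \emph{maximum} segment length gives a fellow-traveling distance that is a fixed constant $D(\alpha_0)$, not $O(\varepsilon)$; for the fellow-traveling distance to shrink with $\varepsilon$ you need a bound from below $L(\varepsilon)\gtrsim\log(1/\varepsilon)$ on the \emph{minimum} segment length, together with the angle bound. That is precisely what the paper's Lemma~5 encodes: extending each arc by $r_\varepsilon=\log(1/\varepsilon)+\log(2e/\sin\theta_0)$ beyond its core and meeting the next geodesic at an angle $\geq\theta_0$ forces any geodesic crossing both endpoints to pass within $\varepsilon$ of the core. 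Your construction has short connectors (horocyclic arcs of length $O(\varepsilon\xi)$ at $H_\xi$, and $O(\varepsilon)$-length edges in the $\varepsilon$-net on $K$), so the quasi-geodesic constants of $\sigma$ are bounded but bounded away from those of a geodesic, and the Morse lemma yields only an $O(1)$ fellow-traveler. Your cusp worry is real and quantitatively severe: in the upper half-plane with the cusp at $\infty$, a down--across--up zig-zag between two vertical strands ending at horocyclic distance $\delta$ on $H_\xi$ straightens to a semicircle of Euclidean radius $\delta/2$, so its highest point is at height $\delta/2$, nowhere near $H_\xi$; the straightened curve does not penetrate the cusp. Your first proposed fix (turns bounded away from $0$ and $\pi$) does not prevent this retraction, and your second (making the zig-zag wind around the cusp) fails because a loop that winds nontrivially around a cusp and stays in the cusp region is homotopic into the cusp and has no geodesic representative. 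The paper sidesteps all of this: it never straightens a curve dipping into a cusp; it uses \emph{arcs} that literally end on $H_\xi$ and relies on Theorem~1, whose proof (Part~2, Cases~B, BA, BB) handles exactly the phenomenon that an extension near a cusp can be unboundedly long by replacing the offending arc with a nearby one that is controlled. To repair your proof you would essentially need to reproduce that machinery.
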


\begin{thmB}\label{B}
Let $X\in \mathcal{M}_{g,n}$, there exists a constant $D_{X}>0$ such that for all $0<\xi \leq 1$ and all $0<\varepsilon \leq \min\{2\log\frac{1}{\xi},2\}$ there exists a doubly truncated orthogeodesic $\smallO_\varepsilon$ that is $\varepsilon$-dense on $X^\xi$ and such that 
\begin{center}
$\ell(\smallO_\varepsilon) \leq D_{X} \dfrac{1}{\varepsilon}\bigg(\log \dfrac{1}{\varepsilon}+\log \dfrac{1}{\xi}\bigg)$.
\end{center}
\end{thmB}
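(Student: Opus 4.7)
The plan is to derive Theorem B from Theorem A by opening the resulting closed geodesic at a point close to $\partial X^\xi$ and reshaping it into a doubly truncated orthogeodesic. First, I would apply Theorem A with the smaller density parameter $\varepsilon' = \varepsilon/K$ for a constant $K \geq 3$ (to be fixed in the analysis), obtaining a closed geodesic $\gamma \subset X$ that is $\varepsilon'$-dense on $X^\xi$, with length bounded by the estimate of Theorem A applied to $\varepsilon'$. Since $\gamma$ is $\varepsilon'$-dense, there exists $p \in \gamma$ with $d(p,\partial X^\xi) \leq \varepsilon'$; let $\alpha$ be the perpendicular geodesic arc from $p$ to $\partial X^\xi$, so that $\ell(\alpha) \leq \varepsilon'$.

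Next, I would consider the piecewise-geodesic loop $c = \alpha^{-1}\cdot\gamma\cdot\alpha$ based on $\partial X^\xi$. Its free homotopy class relative to $\partial X^\xi$ (with endpoints allowed to slide on the horocyclic boundary) contains a unique length-minimizing representative $\smallO_\varepsilon$; by a standard first-variation argument, this minimizer meets $\partial X^\xi$ perpendicularly at its endpoints and is therefore a doubly truncated orthogeodesic. Length minimality gives
\[
\ell(\smallO_\varepsilon) \leq \ell(c) = \ell(\gamma) + 2\ell(\alpha) \leq \ell(\gamma) + 2\varepsilon',
\]
and upon substituting the length bound from Theorem A and absorbing the constant $K$ into a new constant $D_X$, the length estimate in Theorem B follows.

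The principal remaining task---and the main technical obstacle---is to show that $\smallO_\varepsilon$ is $\varepsilon$-dense. The plan is to work in the universal cover $\mathbb{H}^2$: write $\tilde\gamma$ for the axis of the hyperbolic element $g \in \pi_1(X)$ representing $\gamma$, and let $\tilde H$ be a lift of the horocyclic boundary component containing the endpoint of $\alpha$, so that $\tilde H$ passes within $\varepsilon'$ of $\tilde\gamma$; with these choices, $\tilde{\smallO_\varepsilon}$ is the common perpendicular between $\tilde H$ and $g\cdot\tilde H$. A quantitative hyperbolic estimate---exploiting the hypothesis $\varepsilon \leq 2\log(1/\xi)$ to guarantee that the horocycles sit sufficiently deep inside the cusp---should bound the Hausdorff distance between $\smallO_\varepsilon$ and $\gamma$ on the surface by $\varepsilon - \varepsilon'$ (after accounting for $\pi_1(X)$-equivariance, so that different segments of $\smallO_\varepsilon$ are approximated by different lifts of $\tilde\gamma$); combined with the $\varepsilon'$-density of $\gamma$, this yields the desired $\varepsilon$-density of $\smallO_\varepsilon$. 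The hard part will be making this Hausdorff comparison sharp enough to fix $K$; if a direct bound proves elusive, a natural fallback is to wrap $\gamma$ a bounded number of additional times in the construction of $c$ (so that the relevant deck transformation becomes a large power $g^m$), which changes the final length estimate only by a multiplicative constant.
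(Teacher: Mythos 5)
Your proposed route --- apply Theorem~A with a tightened parameter $\varepsilon'=\varepsilon/K$, open the resulting closed geodesic $\gamma$ at a point near $\partial X^\xi$, and straighten the loop $\alpha^{-1}\gamma\alpha$ to the orthogeodesic in its relative homotopy class --- is genuinely different from the paper's. The paper never passes through Theorem~A: it proves a separate statement (Theorem~\ref{maintool2}) by rerunning the connecting algorithm of Theorem~\ref{maintool} on the sequence $\{\zeta'_i\}$ augmented with two one-sided orthogeodesic ``tails'' $\zeta'_0,\zeta'_{N+1}$ landing on horocycles of length~$1$, which produces a doubly truncated orthogeodesic directly; the hypothesis $\varepsilon\le \log\frac{1}{\xi}$ (equivalently $\varepsilon\le 2\log\frac{1}{\xi}$ after rescaling) is then used only to extend that arc from the horocycles of length~$1$ down to $\partial X^\xi$ without leaving $X^\xi$. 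This bypasses any curve-to-curve comparison entirely.

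The trouble with your plan is precisely the step you flag as ``the hard part,'' and I do not think it can be filled in the form stated. Near each endpoint, $\tilde{\smallO_\varepsilon}$ meets the lifted horocycle $\tilde H$ \emph{perpendicularly}, while $\tilde\gamma$ grazes $\tilde H$ nearly \emph{tangentially} (it comes within $\varepsilon'$ of $\tilde H$ and turns away). Concretely, if one normalises $\tilde\gamma$ to be the axis of $g$ with endpoints $a\pm 1$ and $\tilde H=\{y=1\}$, then $\tilde{\smallO_\varepsilon}$ is the vertical line through $g(\infty)=a+\coth(L/2)\approx a+1$, and the distance from the top of $\tilde\gamma$ (roughly $a+i$) to that line is about $\arcsinh 1\approx 0.88$, independent of $\varepsilon'$. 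So the Hausdorff distance between one period of $\tilde\gamma$ and $\tilde{\smallO_\varepsilon}$ is bounded below by a universal constant, not by something of order $\varepsilon$. Invoking $\pi_1$-equivariance does not obviously rescue this: the parabolic translates $T^k\tilde{\smallO_\varepsilon}$ are spaced $\xi$ apart along the horocycle, and one only has $\xi\le e^{-\varepsilon/2}$ from the hypothesis, which does \emph{not} force $\xi\lesssim\varepsilon$; for small $\varepsilon$ and $\xi$ near $1$ the gaps are too wide. The other excursions of $\gamma$ into the cusp do get shadowed by $\smallO_\varepsilon$ in its interior, but the first and last excursions (the ones at the endpoints of the cut) do not, and one would have to show these are already covered by other parts of $\gamma$ --- an additional argument that is neither stated nor clearly true with the constants you have. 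The fallback of raising the deck transformation to a power $g^m$ does not change the perpendicular-vs-tangential mismatch at the endpoints, so it does not resolve the issue either. In short, the length estimate and the first-variation argument for the orthogeodesic representative are fine, but the $\varepsilon$-density claim is unproven and, as stated, the proposed Hausdorff bound appears to be false near the cusp; you would need a substantially different argument there, which is essentially what the paper's direct construction via the augmented connecting algorithm supplies.
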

Our main ingredient in the proof of Theorem A and Theorem B is the following result.
\begin{thm1}\label{das}
For any $X\in \mathcal{M}_{g,n}$, there exists a constant $K_X$ such that the following holds. For all $0<\varepsilon \leq 1$, $0<\xi \leq 1$ and any finite collection $\{c_i\}^N_{i=1}$ of geodesic arcs of average length $\bar{c}$ in $X^\xi$, there exist a closed geodesic $\gamma$ of length at most 
\begin{center}
$N(K_X+\bar{c}+10\log\frac{1}{\varepsilon}+8\log\frac{1}{\xi})$
\end{center}
containing $\{c_i\}^N_{i=1}$ in its $2\varepsilon$-neighborhood.
\end{thm1}
In the following, we will give a brief outline of the arc-replacement idea in \cite{BPS16} for the case of closed surfaces and explain how we adapt it to the case of punctured surfaces in the proof of Theorem \ref{das}.

Let $X$ be a complete finite-area hyperbolic surface. When $X$ is a closed surface, the steps in the proof of \cite[Theorem 2.4]{BPS16} can be described briefly as follows.
\begin{itemize}
  \item[(i)]  Taking a filling closed geodesic $\gamma_0$ on $X$ which decomposes $X$ into polygons.
  \item[(ii)]  For any $\varepsilon>0$, we take a finite collection $\mathcal{A}_N:=\{c_i\}^N_{i=1}$ of geodesic arcs such that the $\varepsilon$-neighborhood of $\mathcal{A}_N$ covers the whole surface $X$. The number of arcs in this collection is roughly $\frac{1}{\varepsilon}$ up to a constant depending on $X$.
  \item[(iii)]  Extending these arcs in both directions a certain distance $r_{\varepsilon}$ of roughly $\log \frac{1}{\varepsilon}$ and then keep extending them (at most a distance $D$ the diameter of $X$) until they connect to $\gamma_0$ with good angles. 
  \item[(iv)] Constructing a closed piecewise geodesic forming from the collection of extended arcs and suitable subarcs of the filling closed geodesic $\gamma_0$. The resulting closed piecewise geodesic is contained in the $\varepsilon$-neighborhood of the desired closed geodesic $\gamma_{\varepsilon}$. The length of $\gamma_{\varepsilon}$ is bounded above by $C_{X} \frac{1}{\varepsilon}\left(\log \frac{1}{\varepsilon}\right)$ where $C_X$ is a constant depending on $X$.
\end{itemize}
When $X$ is a punctured surface, we identify the main obstruction and propose the key idea in the proof of Theorem \ref{das} step by step as follows.
\begin{itemize}
  \item Similar to steps (i) and (ii) above, we take a filling closed geodesic $\gamma_0$ on $S$ such that $\gamma_0$ cuts $S$ into polygons and once-punctured polygons. For any $\varepsilon>0$, we take a finite collection $\{c_i\}^N_{i=1}$ of geodesic arcs on the truncated surface $S^{\xi}$ which contains $S^{\xi}$ in its $\varepsilon$-neighborhood. 
    \item  Major obstruction: it is almost surely possible to extend these arcs in both directions a certain distance $r_{\varepsilon}$ and then keep extending them until they connect to $\gamma_0$ with good angles but it is not always possible to bound the lengths of these extended arcs since they can go into a cusp region for a long time before connecting to $\gamma_0$.  
     \item Key idea: we replace the collection $\{c_i\}^N_{i=1}$  by a better one in the sense that the new collection, denoted by $\{\zeta_i\}^N_{i=1}$, still contains $\{c_i\}^N_{i=1}$  in its $\varepsilon$-neighborhood and when we extend them, they will not go too deep into the cusp region before connecting to $\gamma_0$ with sufficiently big angles. Denote the collection of these extended arcs by $\{\zeta'_i\}^N_{i=1}$.
     \item Applying the step (iv) above for the collection $\{\zeta'_i\}^N_{i=1}$.
\end{itemize}

\section{Geodesics and horocycles in the hyperbolic plane $\mathbb{H}$ and on surfaces}
In this section, we introduce some elementary properties of geodesics traveling through subsurfaces which we will use to prove Theorem \ref{maintool}. Let $P_n$ be a hyperbolic subsurface with a single polygonal boundary of $n$ concatenated geodesic edges such that all angles are less than $\pi$.
Figure \ref{fig:earing}(a) shows an example of $P_{1}$.  
The following lemma is an extended version of Lemma 1 in \cite{BPS16}.
\begin{lem}\label{goodbad}
There exists $\theta_P>0$ such that any geodesic arc $c$ lying inside $P_n$ with endpoints on edges of the $n$-gons forms an acute angle of at least $\theta_P$ in one of its endpoints. Furthermore, the length of $c$ is at most a constant $\ell_P$ if one of the angles is of value less than or equal to $\theta_P$.
\end{lem}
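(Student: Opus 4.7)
The plan is a two-step argument combining hyperbolic convexity with a compactness/non-tangency contradiction, extending the approach of Lemma~1 of \cite{BPS16}.

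For the length bound, any admissible $c$ with endpoints $p\in e_i$, $q\in e_j$ cuts $P_n$ into regions, and I consider the one whose interior angles at the endpoints of $c$ equal the acute angles $\alpha_p,\alpha_q\leq\pi/2$. Its other interior angles are polygon angles $\beta_j<\pi$, so every interior angle is strictly less than $\pi$ and the region is a convex hyperbolic polygon whenever it is a topological disk (in particular, when $P_n$ is simply connected). Convexity forces $c$ to be the shortest path in this region between $p$ and $q$, and comparing $c$ with the complementary boundary arc yields $\ell(c)\leq \tfrac12\ell(\partial P_n)$; one may take $\ell_P=\tfrac12\ell(\partial P_n)$.

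For the angle lower bound, I argue by contradiction: suppose there is a sequence of admissible arcs $\{c_k\}$ with both acute endpoint angles tending to $0$. Passing to subsequences, the endpoints lie on a fixed pair of edges $(e_i,e_j)$ and converge to limits $p_*\in\overline{e_i}$, $q_*\in\overline{e_j}$. By the length bound of the first step, $\ell(c_k)$ is uniformly bounded, so $c_k$ subconverges in $C^1$ to a geodesic arc $c_*$ tangent to $e_i$ at $p_*$ and to $e_j$ at $q_*$. Two distinct geodesics in $\mathbb{H}^2$ cannot be tangent at a point, so lifting to the universal cover shows that $c_*$ must locally coincide with $e_i$; consequently $c_*\subset\partial P_n$, contradicting the requirement that the $c_k$ lie in the interior of $P_n$.

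The main obstacle is the degenerate case where $p_*$ or $q_*$ coincides with a polygon vertex $v$, so that the tangency conclusion above is ambiguous (two distinct edges meet at $v$ and the arcs may shrink towards $v$). I would resolve this by applying hyperbolic Gauss--Bonnet to the small triangle $T_k$ with vertices $p_k$, $q_k$ and $v$: the angle-sum identity
\[
\alpha_{p_k}+\alpha_{q_k}+\beta(v) \;=\; \pi-\mathrm{Area}(T_k),
\]
together with $\mathrm{Area}(T_k)\to 0$ as $p_k,q_k\to v$ (forced by the bounded lengths $\ell(e_i),\ell(e_j)$), gives $\alpha_{p_k}+\alpha_{q_k}\to \pi-\beta(v)\geq \pi-\beta_{\max}>0$, contradicting $\alpha_{p_k},\alpha_{q_k}\to 0$. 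A secondary technical point is the case where $P_n$ is not simply connected and $c$ need not separate $P_n$; there the convex-polygon argument applies to a lift of $c$ inside the universal cover $\widetilde{P_n}\subset\mathbb{H}^2$, where the lift of $\partial P_n$ still has all turning angles $<\pi$.
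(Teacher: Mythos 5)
Your argument takes a genuinely different route from the paper (a compactness/contradiction argument rather than a direct construction), but it has a gap that is fatal precisely in the case the paper cares about, namely once-punctured polygons.

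Your length bound in Step 1 is claimed \emph{unconditionally}: any admissible chord $c$ satisfies $\ell(c)\le\tfrac12\ell(\partial P_n)$. This is false for a once-punctured polygon. There a geodesic arc with both endpoints on the polygonal boundary can wind around the cusp an arbitrary number of times, and such arcs have unbounded length. (Concretely: lift to $\mathbb{H}^2$ with the cusp at $\infty$; $\partial P_n$ lifts to an infinite zigzag of geodesics, and any chord of that zigzag joining far-apart lift edges lies above the zigzag, hence inside $\widetilde{P_n}$, and projects to an admissible arc in $P_n$ of arbitrarily large length.) This is consistent with the lemma, whose length bound is \emph{conditional} on one angle being $\le\theta_P$ --- but it shows your Step 1 cannot be right as stated. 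The place the argument breaks is exactly your ``secondary technical point'': in the universal cover the region you would apply convexity to has unbounded perimeter (it contains arbitrarily many copies of $\partial P_n$), so $\ell(c)\le$ (complementary boundary length) is vacuous; moreover, when the small-angle side of $c$ contains the cusp, that side is not simply connected, so $c$ being a geodesic does not make it the shortest path in it, and convexity gives you nothing. Since Step 2 (compactness) relies on Step 1 to get $\ell(c_k)$ bounded and hence subconvergence, it inherits the same gap, and the circularity is compounded by the fact that the conditional length bound cannot be invoked before $\theta_P$ exists. (Two smaller issues: even in the simply connected case the complementary boundary arc need not be the shorter one, so you only get $\ell_P=\ell(\partial P_n)$, not half of it; and in the vertex-degeneration subcase the Gauss--Bonnet identity you write uses the triangle angles of $T_k$, which are the acute angles $\alpha_{p_k},\alpha_{q_k}$ only when the triangle opens on the acute side --- this is fixable but needs to be said.)

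The paper's proof avoids all of this by a purely local construction: it cuts off ``ears'' $A_iA_{i+1}A_{i+2}$ that are genuine geodesic triangles containing no cusp, sets $\theta_P$ and $\ell_P$ to be the extremal angle and side length over all ears, and then shows by a Gauss--Bonnet area comparison that if $c$ makes an angle $\le\theta_P$ at one endpoint, then $c$ stays inside an ear; this simultaneously yields the opposite-angle lower bound and the length bound, with the cusp never entering because the ears are cusp-free. If you want to salvage a compactness-style proof, you would first have to show that a small angle at one endpoint forces $c$ into a fixed compact, cusp-free subregion --- which is essentially what the ear construction does directly.
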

\begin{proof}
We first label the vertices of the $n$-gonal boundary of $P_n$ by $A_1, A_2, ..., A_n$ consecutively. For each $i\in\{1,2,...,n\}$, we can connect $A_i$ to $A_{i+2}$ by a shortest geodesic arc lying inside the interior of $P_n$ (in which $A_{n+1}:=A_1$ and $A_{n+2}:=A_2$) such that there is no cusp or geodesic boundary component in the resulting triangle $A_iA_{i+1}A_{i+2}$. We call each such resulting triangle to be an ear of $P_n$.
In the set of angles of the ears in $P_n$ (three angles for each ear), we denote by $\theta_P$ their minimum value. Also, in the set of sides of the ears in $P_n$, we denote by $\ell_P$ their maximum value.

\begin{figure}[h!]
    \centering
    \subfloat[An example of $P_1$.]{{\includegraphics[width=5cm]{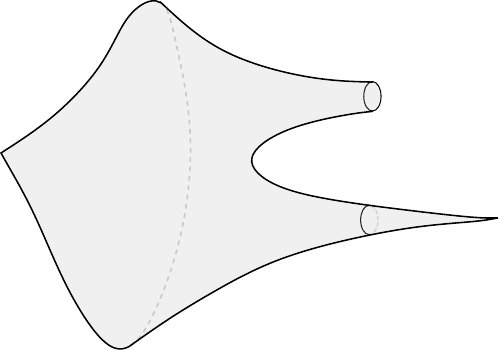} }}
    \qquad
    \subfloat[The geodesic arc $c$ in the triangle $A_1A_2A_3$.]{{\includegraphics[width=5cm]{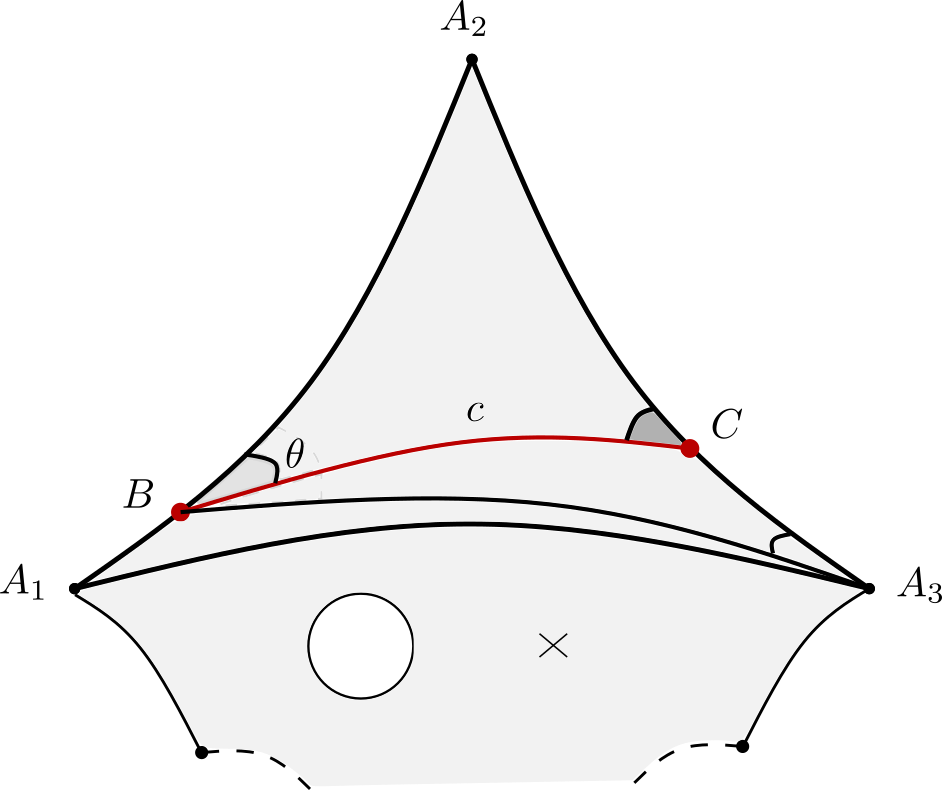} }}
    \caption{}
    \label{fig:earing}
\end{figure}

 Without loss of generality, we can assume that the geodesic arc $c$ leaving from the point $B$ on the interior of a side $A_1A_2$ of the $n$-gons forms an angle $\theta$ of at most $\theta_P$ as in Figure \ref{fig:earing}(b). Since the triangle $BA_2A_3$ is contained in the ear $A_1A_2A_3$, Area$(BA_2A_3)\leq $ Area$(A_1A_2A_3)$. As these two triangles are sharing an angle $A_2$, by Gauss-Bonnet, the sum of the two remaining angles of $BA_2A_3$ is greater than or equal to the sum of the angles $A_1$ and $A_3$ of the ear. In other words, we have 
 $$ \measuredangle  A_2BA_3 + \measuredangle A_2A_3B \geq \measuredangle A_2A_1A_3 + \measuredangle A_2A_3A_1.$$
 
 Since $B$ is on the interior of the side $A_1A_2$, we have $\measuredangle A_2A_3B < \measuredangle A_2A_3A_1$ and thus $\measuredangle A_2BA_3 > \measuredangle A_2A_1A_3$. Hence $\measuredangle A_2BA_3 > \theta_P \geq \theta$ and $c$ lies inside $BA_2A_3$. This implies that $c$ also lies inside the ear and the triangle $BA_2C$ is contained in the ear. By the same argument, we can show that the angle at $C$ (i.e. the remaining angle formed by $c$ and an edge of the $n$-gons) is greater than $\theta_P$. 
The fact $c$ lies inside the ear also tells us that the length of $c$ has to be less than or equal to at least one of three sides of the ear, hence $\ell(c) \leq \ell_P$. Note that $\theta_P$ and $\ell_P$ are optimal as the inner sides of the ears are admissible geodesic arcs.
\end{proof}
In this paper, we only need to focus on the case of once-punctured polygons. We also refer the reader to \cite[Chapter 2]{buser2010geometry} and \cite[Chapter 7]{beardon2012geometry} for all necessary trigonometric formulae. The following lemma will give us an upper bound on the length of the geodesic arc that traverses inside the polygon with endpoints lying on the boundary of the polygon.
\begin{lem}\label{maxtravelling}
Let $P$ be a once-punctured polygon and $\psi \in [0,\frac{\pi}{2})$ a constant. Let $h$ be a closed horocycle lying inside $P$. Let $d$ be the maximal distance from a point on $\partial{P}$ to $h$. Then for any geodesic arc $c$ in $P$ with two endpoints on $\partial P$ and $\theta:=\measuredangle(c,h)\leq \psi$, we have
\begin{center}
$\ell(c) \leq \arccosh\bigg(\dfrac{2e^{2d}}{\cos^2{\psi}}-1\bigg)$.
\end{center}
\end{lem}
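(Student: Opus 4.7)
The plan is to lift the configuration to the upper half-plane model of $\mathbb{H}^2$, sending the cusp of $P$ to $\infty$ so that $h$ becomes a horizontal line $\{y = y_0\}$ for some $y_0 > 0$. Because $\partial P$ lies on the non-cusp side of $h$ and every point of $\partial P$ is at hyperbolic distance at most $d$ from $h$, any lift of a point of $\partial P$ has $y$-coordinate in $[y_0 e^{-d},\, y_0]$. The hypothesis $\theta \leq \psi < \pi/2$ prevents $c$ from being a vertical geodesic, so its lift is a Euclidean half-circle of some radius $r$ centered on the real axis.

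The first step is the standard relation between $r$ and the crossing angle. Parametrising the half-circle by $\phi \mapsto (x_c + r\cos\phi,\, r\sin\phi)$, the intersections with $\{y = y_0\}$ occur at $\sin\phi = y_0/r$, and a direct computation (using that angles between tangent vectors are the same in the Euclidean and hyperbolic metrics) shows that the angle between the tangent to the half-circle and the horizontal horocycle there is $\theta$ with $\cos\theta = y_0/r$. Combined with $\theta \leq \psi$, this gives $r \leq y_0 / \cos\psi$.

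The second step is to estimate $\ell(c)$. In the same parametrisation the hyperbolic arc length element is $d\phi/\sin\phi$, with antiderivative $\log\tan(\phi/2)$. Since $c$ crosses $h$ and both of its endpoints lie strictly below $h$ on $\partial P$, the endpoints necessarily sit on opposite arcs of the half-circle outside the two crossing points; in particular their $\phi$-values $\phi_1 < \phi_2$ straddle $\pi/2$, with $\sin\phi_i = y_i/r$ and $y_i \in [y_0 e^{-d}, y_0]$. Hence
\[
\ell(c) = \log\tan(\phi_2/2) - \log\tan(\phi_1/2),
\]
which is strictly decreasing in each $y_i$ and increasing in $r$, so it is maximised at $y_1 = y_2 = y_0 e^{-d}$ and $r = y_0/\cos\psi$. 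A short simplification of this extremal value using $\tanh^2(\ell/2) = 1 - y_0^2 e^{-2d}/r^2$ together with $\cosh\ell = 1 + 2\sinh^2(\ell/2)$ then yields $\cosh \ell(c) \leq 2 e^{2d}/\cos^2\psi - 1$, which is the desired inequality.

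The only genuine subtlety I expect is justifying the geometric picture in the lift, specifically that the two endpoints of $c$ must lie on opposite sides of the apex of the half-circle. This uses both that $\partial P$ is entirely contained in the non-cusp side of $h$ and that a half-circle crossing $\{y=y_0\}$ at an acute angle less than $\pi/2$ crosses it exactly twice, so an arc with both endpoints below $\{y=y_0\}$ and realising that angle is forced to traverse the apex. Once this is in hand, the rest is the direct computation described above.
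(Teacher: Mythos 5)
Your proof is correct and follows essentially the same route as the paper: lift to the upper half-plane with the cusp at infinity, observe that the crossing angle forces the half-circle radius to satisfy $r \le y_0/\cos\psi$, and compute that the extremal chord (both endpoints at the lowest admissible height $y_0 e^{-d}$ on a circle of maximal radius) has hyperbolic length $\arccosh(2e^{2d}/\cos^2\psi - 1)$. The paper normalizes slightly differently (placing one endpoint at $i$ and $h$ at height $e^d$, so the extremal chord is $i$ to $2A+i$ with $A^2+1 = e^{2d}/\cos^2\theta$) and simply asserts the extremal configuration; you spell out the monotonicity of $\ell$ in $y_1, y_2, r$ and the geometric fact that the endpoints straddle the apex, which is a useful piece of bookkeeping the paper leaves implicit.
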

\begin{proof}
We lift to $\mathbb{H}$ as in Figure \ref{fig:travelling}.
\begin{center}
\begin{minipage}{\linewidth}
\begin{center}
\includegraphics[width=0.6\linewidth]{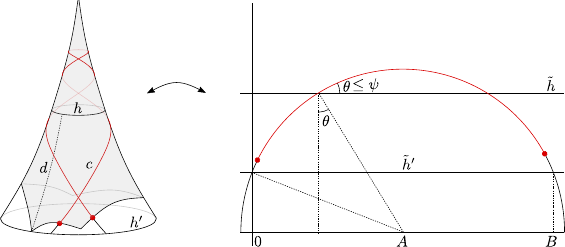}
\captionof{figure}{}
\label{fig:travelling}
\end{center}
\end{minipage}
\end{center}
A lift of $c$ is contained in the geodesic segment with endpoints $i$ and $B+i$, where
\begin{center}
$A^2+1=\bigg(\dfrac{e^d}{\cos\theta}\bigg)^2, \text{and } B=2A.$
\end{center}
Hence 
\begin{center}
$\cosh{\ell(c)}\leq \cosh(d_{\mathbb{H}}(2A+i,i))=1+\dfrac{|2A|^2}{2}=\dfrac{2e^{2d}}{\cos^2\theta}-1\leq \dfrac{2e^{2d}}{\cos^2\psi}-1$.
\end{center}
\end{proof}
The next lemma describes some properties in a certain type of quadrilateral. Let $h_1$ and $h_2$ be two disjoint horocycles in $\mathbb{H}$. Let $A_1A_2$ be the common orthogonal between $h_1$ and $h_2$. For $i=1,2$, let $B_i, C_i$ be points on $h_i$ so that $B_1B_2C_2C_1$ becomes a quadrilateral with two horocyclic edges $\{B_1C_1, B_2C_2\}$ and two geodesic edges $\{B_1B_2, C_1C_2\}$.
\begin{lem}\label{convexity}
Suppose the inner acute angles of the quadrilateral $B_1B_2C_2C_1$ are of the same value $\psi$. Then 
\begin{center}
$\ell(B_1C_1)=\ell(B_2C_2)=2\sqrt{\tan^2\psi+e^{-\ell(A_1A_2)}+1}-2\tan{\psi}$.
\end{center}
Furthermore, every geodesic segment which only meets $h_1$ and $h_2$ at its endpoints lies totally inside the quadrilateral if and only if each of the two acute angles at the endpoints is of value at least $\psi$.
\end{lem}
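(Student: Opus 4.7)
The plan is to establish the length formula by explicit coordinate computation in the upper half-plane, and then deduce the angle characterization from a Euclidean monotonicity argument. I would first normalize via $\mathrm{Isom}(\mathbb{H})$ so that $h_1=\{y=1\}$ is centered at $\infty$ and $h_2$ is the Euclidean circle of radius $\rho$ tangent to the real axis at $0$; the common perpendicular is then the positive imaginary axis with $A_1=(0,1)$, $A_2=(0,2\rho)$, and $2\rho=e^{-\ell(A_1A_2)}$. The reflection across the imaginary axis is an isometry preserving both horocycles, and by uniqueness of the two-sided semicircle of radius $\sec\psi$ whose center $(m,0)$ satisfies the angle condition at $h_2$, it must exchange the two side geodesics $B_1B_2$ and $C_1C_2$. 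Hence the quadrilateral is symmetric about $A_1A_2$ and $\ell(B_1C_1)=2\,\ell(A_1B_1)$.

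Parametrize the geodesic $B_1B_2$ as a Euclidean semicircle of center $(m,0)$ and radius $r$. The angle condition at $B_1$ gives $r=\sec\psi$. The Euclidean angle between two circles equals the hyperbolic angle by conformality; applying the cosine rule to the Euclidean triangle with vertices $(m,0)$, $(0,\rho)$ and $B_2$ shows that the angle condition at $B_2$ is
\[
\cos\psi \;=\; \frac{|r^2-m^2|}{2r\rho}.
\]
Substituting $r=\sec\psi$ and $2\rho=e^{-\ell(A_1A_2)}$ and solving gives $m^2=\sec^2\psi+e^{-\ell(A_1A_2)}$ (the other root is discarded as it produces a semicircle crossing the common perpendicular, hence a self-intersecting quadrilateral). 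Intersecting the semicircle with $h_1$ yields $x_{B_1}=m-\tan\psi$, and therefore
\[
\ell(B_1C_1)=2(m-\tan\psi)=2\sqrt{\tan^2\psi+e^{-\ell(A_1A_2)}+1}-2\tan\psi.
\]
The corresponding identity $\ell(B_2C_2)=\ell(B_1C_1)$ follows either by applying the M\"obius isometry $z\mapsto-1/z$, which swaps the roles of $h_1$ and $h_2$ after appropriate renormalization, or by direct integration of the horocyclic arclength $\int d\phi/(1-\cos\phi)$ on $h_2$ in the angular parametrization $(\rho\sin\phi,\rho(1-\cos\phi))$.

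For the characterization, let $\sigma$ be a geodesic segment from $P\in h_1$ to $Q\in h_2$ meeting each horocycle only at its endpoint, realized as a Euclidean semicircle of center $(m',0)$ and radius $r'=\sec\alpha_P$. Non-crossing of $\sigma$ with the side geodesic $B_1B_2$ (respectively $C_1C_2$) reduces to a transparent inequality between the parameters $(m',r')$ and $(\pm m,\sec\psi)$; combined with the constraint $|x_P|\leq m-\tan\psi$ forcing $P$ to lie on the arc $B_1C_1$, this inequality is equivalent to $\alpha_P\geq\psi$. The symmetric argument at $Q$ (either by direct computation on $h_2$ or by invoking the $z\mapsto-1/z$ isometry) gives $\alpha_Q\geq\psi$. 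The main obstacle here is making the sandwiching rigorous in both directions: one must verify that as $\alpha_P$ decreases through $\psi$ with $P$ fixed and $\sigma$ leaning toward a given side, $\sigma$ first becomes tangent to and then crosses the corresponding boundary geodesic, which reduces to the standard fact that two semicircles centered on the real axis intersect iff the distance between their centers lies in $(|r_1-r_2|,r_1+r_2)$.
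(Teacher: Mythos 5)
Your computation of the length formula is correct and takes a genuinely different route from the paper. You normalize so that $h_1$ is the line $y=1$ and $h_2$ is the circle of Euclidean radius $\rho$ tangent at the origin, then pin down the side geodesic by combining the line--circle angle relation $r=\sec\psi$ at $h_1$ with the cosine rule for the angle between two Euclidean circles at $h_2$, which yields $m^2=\sec^2\psi+e^{-u}$ directly. The paper instead works with $h_1=\{y=e^u\}$ and the horocycle through $i$, writes down the explicit reflection $f(z)=e^u/\bar z$ to locate $C_2$, and extracts $\tan\psi$ from an auxiliary Euclidean right triangle anchored at a point on the real axis. Both are short, but your route bypasses the explicit isometry and the auxiliary point and is, to my taste, a little cleaner; your rejection of the spurious root $m^2=\sec^2\psi-e^{-u}$ (the semicircle would cross $A_1A_2$ at height $e^{-u/2}$) is also correctly reasoned. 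One small remark: your $\ell(B_1C_1)=2(m-\tan\psi)$ uses that on $h_1=\{y=1\}$ the horocyclic arclength coincides with Euclidean $x$-length, which you should state.

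For the characterization, your sketch is parallel in spirit but differs in mechanism from the paper's: the paper fixes the angle $\phi$ at one endpoint, slides the segment along the horocycles, and appeals to concavity of the induced angle at the other endpoint; you instead want to encode non-crossing of $\sigma$ with the two side geodesics via the criterion $|r_1-r_2|<d<r_1+r_2$ for intersection of coaxial semicircles, together with the constraint that the endpoint lies on the arc $B_1C_1$. This is a reasonable and arguably more concrete strategy, but as written it is incomplete in two places, and you yourself flag the first. (i) The ``transparent inequality'' is never written down; one has to check that it is genuinely equivalent to $\alpha_P\geq\psi$ \emph{and} $\alpha_Q\geq\psi$ rather than to only one of them, since non-crossing with $B_1B_2$ and with $C_1C_2$ are two separate conditions and the arc constraint at $P$ does not by itself control the angle at $Q$. (ii) The two-circle criterion governs intersection of the full semicircles; one must additionally verify that any such intersection actually occurs on the segment between $h_1$ and $h_2$, not outside the quadrilateral. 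To be fair, the paper's own concavity argument is stated at a comparable level of detail, so the gap is not larger than the one it is meant to replace, but if you intend to make this rigorous these are the two points to nail down.
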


\begin{figure}[h!]
\centering
  \includegraphics[width=10cm]{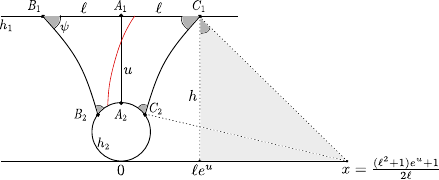}
\captionof{figure}{}
  \label{fig:6}
\end{figure}
\begin{proof} Denote by $u$ the length of $A_1A_2$. Let $h_1$ be the horizontal line $y=ie^u$, $h_2$ be the horocycle centered at 0 and going through $i$ as in Figure \ref{fig:6}. We can also suppose that $A_1=ie^u$, $A_2=i$, hence $C_1=\ell e^u +ie^u$ where $\ell$ is defined by the length of the horocyclic segment $A_1C_1$. By symmetry of the quadrilateral, we can find an involution $f$ which is a non-orientation-preserving isometry sending $A_1$ to $A_2$, $B_1$ to $B_2$ and $C_1$ to $C_2$. By a standard computation, $f(z)=\frac{a\bar{z}+b}{c\bar{z}+d}$ where $a=d=0, b=e^{\frac{u}{2}}$, and $c=e^{\frac{-u}{2}}$. As a consequence,
\begin{center}
 $C_2=f(C_1)=f(\ell e^u +ie^u)=\dfrac{\ell}{\ell^2+1}+\dfrac{i}{\ell^2+1}$.
\end{center}
Let $x$ be a point on the real line of $\mathbb{H}$ such that the Euclidean distances from $C_1$ and $C_2$ to $x$ are the same. By computation, $x=\frac{(\ell^2+1)e^u+1}{2\ell}$. Now applying the Euclidean trigonometric formula for the shaded Euclidean right triangle in figure \ref{fig:6}, noting that the value of the angle at $C_1$ of this triangle is exactly $\psi$, we get
\begin{center}
$\tan \psi=\dfrac{\frac{(\ell^2+1)e^u+1}{2\ell}-\ell e^u}{e^u}=\dfrac{-\ell^2+1+e^{-u}}{2\ell}.$
\end{center}
From this, we obtain the value of $\ell$ in terms of $\psi$ and $u$.

For the second part, we fix an angle $\phi$ of value between $\psi$ and $\frac{\pi}{2}$ at one endpoint of $c$, and observe what happens to the acute angle at the other endpoint of $c$ while moving $c$ along the horocycles and keeping the value of the angle $\phi$. The behavior of the values of the remaining acute angle is exactly that of a concave function. By symmetry of the quadrilateral, $c$ lies inside the quadrilateral if and only if both acute angles at the endpoints are of value at least $\psi$.
\end{proof}
 Next, we recall two useful facts. 
\begin{lem}\cite[Lemmas 2.2]{BPS16} \label{disjoint}
Let $\dfrac{\pi}{2}\geq \theta_0 >0$, and set $m(\theta_0):=2\log\bigg(\dfrac{1}{\sin{\theta_0}}\bigg)+2\log(1+\cos{\theta_0})$. If $c$ is an oriented geodesic segment in $\mathbb{H}$ of length at least $m(\theta_0)$ between two (complete) geodesics $\gamma_1,\gamma_2$ such that the starting (resp. end) point of $c$ lies on $\gamma_1$ (resp. $\gamma_2$) and $\measuredangle(c,\gamma_i)\geq \theta_0$ for $i=1,2$, then $\gamma_1$ and $\gamma_2$ are disjoint.
\end{lem}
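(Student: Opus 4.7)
The plan is to prove the contrapositive: assuming $\gamma_1$ and $\gamma_2$ meet (at a point $P$ of $\mathbb{H}$ or at a common ideal endpoint), I will deduce that $\ell(c)\leq m(\theta_0)$. First I would form the hyperbolic triangle $T$ with vertices $P$, $X_1:=c(0)$, $X_2:=c(\ell(c))$, whose sides are $PX_1\subset\gamma_1$, $PX_2\subset\gamma_2$ and $c$ itself. Writing $\alpha,\beta$ for the interior angles of $T$ at $X_1,X_2$ and $\gamma\in[0,\pi)$ for the angle at $P$ (with $\gamma=0$ in the asymptotic case), each of $\alpha,\beta$ is either $\measuredangle(c,\gamma_i)$ or its supplement. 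Since $\theta_0\leq\pi/2$, both supplementary values are at least $\theta_0$, so $\alpha,\beta\geq\theta_0$.

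Next I would apply the second hyperbolic law of cosines at the vertex $P$,
\[
\cos\gamma \;=\; -\cos\alpha\cos\beta + \sin\alpha\sin\beta\,\cosh\ell(c),
\]
and use $\cos\gamma\leq 1$ to obtain
\[
\cosh\ell(c) \;\leq\; \frac{1+\cos\alpha\cos\beta}{\sin\alpha\sin\beta}.
\]
A short derivative computation shows that the right-hand side is strictly decreasing in each of $\alpha,\beta$ on the region $\{\alpha+\beta<\pi\}$: its partial in $\alpha$ equals $-(\cos\alpha+\cos\beta)/(\sin^2\alpha\sin\beta)$, which is negative whenever $\alpha+\beta<\pi$. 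Hence the maximum over $\alpha,\beta\geq\theta_0$ (subject to the triangle constraint) is attained at $\alpha=\beta=\theta_0$, giving
\[
\cosh\ell(c) \;\leq\; \frac{1+\cos^2\theta_0}{\sin^2\theta_0}.
\]

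Finally I would verify the algebraic identity $\cosh m(\theta_0)=\frac{1+\cos^2\theta_0}{\sin^2\theta_0}$. Using $m(\theta_0)=2\log\frac{1+\cos\theta_0}{\sin\theta_0}=2\log\cot(\theta_0/2)$ together with the half-angle substitution $t=\tan(\theta_0/2)$ (so $\sin\theta_0=2t/(1+t^2)$, $\cos\theta_0=(1-t^2)/(1+t^2)$), both sides simplify to $\frac{1+t^4}{2t^2}$. Combining yields $\ell(c)\leq m(\theta_0)$, completing the contrapositive.

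The main obstacle I anticipate is the bookkeeping in the first step: I must confirm that the interior angles of $T$ really are both at least $\theta_0$ (the hypothesis only bounds $\measuredangle(c,\gamma_i)$, which may be the supplement of the triangle's interior angle if $P$ lies on the ``wrong'' side of $c$), and I must also permit the degenerate case where $\gamma_1,\gamma_2$ are asymptotic so that $T$ is ideal at $P$. Both issues are resolved cleanly by the assumption $\theta_0\leq\pi/2$ and by allowing $\gamma=0$ in the law of cosines, but the sign and orientation discussion deserves to be made explicit.
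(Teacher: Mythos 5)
The paper does not supply its own proof of this lemma; it is simply recalled from \cite{BPS16} (``Next we recall Lemmas 2.2 and 2.3 from [BPS16]''), so there is no in-paper argument to compare against. Your contrapositive argument via the second hyperbolic law of cosines is correct and self-contained: the reduction to the hyperbolic triangle at the intersection point $P$, the monotonicity computation showing $\partial_\alpha\big[(1+\cos\alpha\cos\beta)/(\sin\alpha\sin\beta)\big]=-(\cos\alpha+\cos\beta)/(\sin^2\alpha\sin\beta)<0$ on $\{\alpha+\beta<\pi\}$, and the algebraic verification $\cosh m(\theta_0)=(1+\cos^2\theta_0)/\sin^2\theta_0$ via $t=\tan(\theta_0/2)$ are all accurate, and the angle bookkeeping (each triangle angle is either $\measuredangle(c,\gamma_i)$ or its supplement, and both exceed $\theta_0$ since $\theta_0\leq\pi/2$) is handled cleanly. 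One small clarification is worth making explicit: the exact contrapositive needs $\ell(c)<m(\theta_0)$ when $\gamma_1\cap\gamma_2\neq\emptyset$ in $\mathbb{H}$, not just $\leq$. This does hold in your argument, since a finite intersection point $P$ forces $\gamma>0$, hence $\cos\gamma<1$ and a strict inequality in the law-of-cosines bound; the ideal-endpoint case you parenthetically allow (with $\gamma=0$) is already disjoint in $\mathbb{H}$ and need not be excluded, so you may as well restrict to finite $P$ from the start.
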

\begin{lem}\cite[Lemmas 2.3]{BPS16}\label{epsilondistance}
Let  $\dfrac{\pi}{2}\geq \theta_0 >0$ be a fixed constant. Let $c$ be a geodesic segment in $\mathbb{H}$ and $\gamma_c$ the complete geodesic containing $c$. Fix $\varepsilon \in (0,2]$ and let $\gamma_1$ and $\gamma_2$ be geodesics that intersect $\gamma_c$ such that intersection points $p_1, p_2$ lie on different sides of $c$. There exists an $r_{\varepsilon}$ (depending only on $\varepsilon$ and $\theta_0$), so that if $\measuredangle(\gamma_i,\gamma_c)\geq \theta_0$ and $d(c,p_i)\geq r_{\varepsilon}$, for $i=1,2$, then  $\gamma_1$ and $\gamma_2$ are disjoint.\\ 
Furthermore, for any geodesic $\gamma$ intersecting both $\gamma_1$ and $\gamma_2$, we have the following properties:

(P1) $c\subset B_{\varepsilon}(\gamma)$.

(P2)  The image of the orthogonal projection of $c$ on $\gamma$ is contained in the middle part of $\gamma$ (i.e. it lies between $\gamma_1$ and $\gamma_2$).

\end{lem}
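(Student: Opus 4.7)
The plan splits naturally: first establish disjointness of $\gamma_1,\gamma_2$ from the previous lemma, then do a coordinate computation in $\mathbb{H}$ to prove properties (P1) and (P2).

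For disjointness, note that the subsegment of $\gamma$ joining $p_1$ to $p_2$ contains $c$ and has length at least $d(c,p_1)+d(c,p_2) \geq 2(\log(1/\varepsilon)+\log(2e/\sin\theta_0))$. A short calculation shows this exceeds the threshold $m(\theta_0)=2\log(1/\sin\theta_0)+2\log(1+\cos\theta_0)$ of Lemma \ref{disjoint}: after cancelling $2\log(1/\sin\theta_0)$, the required inequality reduces to $2\log(2/\varepsilon)+2 \geq 2\log(1+\cos\theta_0)$, which holds since the left side is at least $2$ (using $\varepsilon \leq 2$) and the right side is at most $2\log 2 < 2$. Lemma \ref{disjoint} then applies to this subsegment.

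For (P1) and (P2) I would place $\gamma$ on the imaginary axis of $\mathbb{H}$, choosing coordinates so that $c=[ie^{-L},ie^{L}]$ with $2L=\ell(c)$, hence $p_1=ie^{-L-s_1}$ and $p_2=ie^{L+s_2}$, writing $s_i:=d(c,p_i)$. Each $\gamma_i$ is a Euclidean semicircle through $p_i$ whose angle $\theta_i \geq \theta_0$ with $\gamma$ translates into a Euclidean radius $R_i=|p_i|/\sin\theta_i$. Consequently both boundary endpoints of $\gamma_1$ on $\mathbb{R}$ lie in an interval around $0$ of width at most $O(e^{-s_1}/\sin\theta_0)$, while $\gamma_2$ has one positive and one negative boundary endpoint of magnitude at least $e^{L+s_2}(1-\cos\theta_0)$. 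Any geodesic $\gamma'$ meeting both has boundary endpoints $u<v$ interlacing with those of $\gamma_1$ and $\gamma_2$; this pins one of $u,v$ into the small window near $0$ coming from $\gamma_1$ and pushes the other far out beyond the $\gamma_2$ window. Plugging these ranges into the standard distance formula
\[
\sinh d(it,\gamma') \;=\; \frac{|t^{2}+uv|}{t(v-u)}
\]
for $t\in[e^{-L},e^{L}]$ yields a bound essentially of the form $\sinh d \lesssim e^{-s_1}/\sin\theta_0 + e^{-s_2}$; the hypothesis $s_i \geq \log(1/\varepsilon)+\log(2e/\sin\theta_0)$ is calibrated precisely so that this is at most $\sinh\varepsilon$, using $\sinh\varepsilon \geq \varepsilon$ and $\varepsilon \leq 2$. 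This gives (P1). For (P2), the height of the foot of the perpendicular from $it \in c$ to $\gamma'$ can be read off from the same coordinates as $\sqrt{|uv|}\cdot(\text{bounded factor})$, and the bounds on $|u|$ and $|v|$ place this foot at a height strictly between those of $p_1$ and $p_2$, i.e., on the arc of $\gamma'$ between $\gamma_1$ and $\gamma_2$.

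The main obstacle is the case analysis for the general position of $\gamma_1$ and $\gamma_2$: each semicircle can ``lean'' either way (its Euclidean centre can lie on either side of the imaginary axis), which produces several subcases in the interlacing analysis, and it is precisely the $(1-\cos\theta_0)$ factor in the lower bound on $|v|$ that necessitates the $2e/\sin\theta_0$ appearing in the hypothesis. Once this bookkeeping is done, the rest is routine hyperbolic trigonometry in the upper half-plane.
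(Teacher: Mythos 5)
Your proposal takes a genuinely different computational route from the paper. The paper treats this lemma as a correction of Lemma 2.3 in \cite{BPS16}: it defers the disjointness claim to that reference, proves (P1) by manipulating the synthetic relations $\cosh\frac{\ell(\mu)}{2}=\cosh(r_\varepsilon+\frac{\ell(c)}{2})\sin\theta_0$, $\sinh d'=1/\sinh\frac{\ell(\mu)}{2}$, $\sinh h'=\sinh d'\cosh\frac{\ell(c)}{2}$ into an $\arccosh$ inequality, and proves (P2) by an angle-sum argument in a quadrilateral together with $\cos\theta=\frac{\tanh r_\varepsilon-\cos\theta_0}{1-\tanh r_\varepsilon\cos\theta_0}$. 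You instead work in upper-half-plane coordinates with boundary-endpoint interlacing and the formula $\sinh d(it,\gamma')=|t^2+uv|/(t(v-u))$. Both are standard dialects; yours is more elementary and makes the ``where can $\gamma'$ live'' constraint very transparent, at the cost of more case bookkeeping on signs.

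Two concrete remarks. Your disjointness argument (apply Lemma \ref{disjoint} to the subarc of $\gamma$ from $p_1$ to $p_2$ and check $2r_\varepsilon\geq m(\theta_0)$) is clean, self-contained, and correct; it is arguably an improvement over the paper's deferral. For (P1), the plan is sound, but note that your stated bound $\sinh d\lesssim e^{-s_1}/\sin\theta_0 + e^{-s_2}$ loses a $\sin\theta_0$ in the second term: the near endpoint of $\gamma_2$ has magnitude at least $e^{L+s_2}\frac{1-\cos\theta_0}{\sin\theta_0}$, not $e^{L+s_2}(1-\cos\theta_0)$, so $t/v$ is only bounded by $\frac{2e^{-s_2}}{\sin\theta_0}$. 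The hypothesis still leaves enough slack (the extra $e$ in $2e/\sin\theta_0$), but you should track the factor-of-two losses from $v-u\geq v-|u|$ carefully, since a naive accounting already gives $\sinh d\leq \frac{4\varepsilon}{e}$, which is not under $\sinh\varepsilon$ for $\varepsilon$ near $1$ without further tightening.

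The genuine gap is (P2). The claim that the foot of the perpendicular from $it$ to $\gamma'$ sits at height $\sqrt{|uv|}\cdot(\text{bounded factor})$ is not right: $\sqrt{|uv|}$ is the height at which $\gamma'$ crosses the imaginary axis (when $uv<0$), whereas the foot of the perpendicular from $it$ is at height $\approx t$ in the regime $|u|\ll t\ll v$, with the ratio to $\sqrt{|uv|}$ unbounded in general. What you actually want for (P2) is to locate the two points $\gamma'\cap\gamma_1$ and $\gamma'\cap\gamma_2$ on $\gamma'$ and show that the orthogonal projection of every $it$, $t\in[e^{-L},e^L]$, lands between them; this requires either a monotonicity argument along $\gamma'$ or a direct comparison of the projections of the endpoints of $c$ with those intersection points. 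As written, this part is a gesture rather than an argument, and the one formula it leans on is incorrect, so (P2) would need to be reworked before the proof closes.
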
 
\begin{proof}
The properties: ``$\gamma_1$ and $\gamma_2$ are disjoint'' and (P1) are proved in \cite[Lemma 2.3]{BPS16}, here we will fix a minor mistake in their proof to obtain a correct value of $r_{\varepsilon}$ under the requirement that $0<\varepsilon \leq 2$. 

\textbf{(P1)} Keeping all notations introduced in \cite[Lemma 2.3]{BPS16}, from the proof we already had:
\begin{equation}\label{neww}
\cosh{\frac{\ell(\mu)}{2}}=\cosh\bigg(r_\varepsilon+\frac{\ell(c)}{2}\bigg)\sin(\theta_0) \,\,\,;\,\,\,  \sinh{d'}=\dfrac{1}{\sinh{\frac{\ell(\mu)}{2}}} \,\,\,;\,\,\,  \sinh{h'}=\sinh{d'}\cosh {\frac{\ell(c)}{2}}.
\end{equation}
In \cite{BPS16}, the authors deduced from Equalities \ref{neww} the following incorrect relation:
\begin{center}
$\sin(\theta_0)\cosh\bigg(r_\varepsilon+\frac{\ell(c)}{2}\bigg)\sinh{h'}=\cosh {\frac{\ell(c)}{2}}$
\end{center}
At the end of their proof, they deduced the following relation
\begin{center}
$r_{\varepsilon}\geq \log \bigg(\dfrac{1}{\varepsilon}\bigg)+\log{ \bigg(\dfrac{4}{\sin{\theta_0}}\bigg)}$,
\end{center}
which holds for any $\varepsilon >0$. Then they chose $r_{\varepsilon}:= \log \bigg(\dfrac{1}{\varepsilon}\bigg)+\log{ \bigg(\dfrac{4}{\sin{\theta_0}}\bigg)}$. Fortunately, this incorrect value of $r_{\varepsilon}$ does not affect the overall conclusion of the main theorems since later on one will see that the condition $0<\varepsilon \leq 2$ is necessary, and then the correct value of  $r_{\varepsilon}$ can be chosen as $\log \bigg(\dfrac{1}{\varepsilon}\bigg)+\log{ \bigg(\dfrac{2e}{\sin{\theta_0}}\bigg)}$. From Equalities \ref{neww}, we deduce that
\begin{center}
$\sinh^2(h')=\sinh^2(d')\cosh^2\bigg(\frac{\ell(c)}{2}\bigg)=\dfrac{\cosh^2\bigg(\frac{\ell(c)}{2}\bigg)}{\cosh^2\bigg(\frac{\ell(\mu)}{2}\bigg)-1}=\dfrac{\cosh^2\bigg(\frac{\ell(c)}{2}\bigg)}{\cosh^2\bigg(r_\varepsilon+\frac{\ell(c)}{2}\bigg)\sin^2(\theta_0)-1}$.
\end{center}
We want to show that $h' \leq \frac{\varepsilon}{2}$ thus that
\begin{equation}\label{new1}
\dfrac{\cosh^2\bigg(\frac{\ell(c)}{2}\bigg)}{\cosh^2\bigg(r_\varepsilon+\frac{\ell(c)}{2}\bigg)\sin^2(\theta_0)-1} \leq \sinh^2 \bigg(\dfrac{\varepsilon}{2}\bigg)
\end{equation}
and Inequality \ref{new1} is equivalent to the following:
\begin{center}
$\cosh^2\bigg(r_\varepsilon+\frac{\ell(c)}{2}\bigg) \geq \dfrac{\cosh^2\big(\frac{\ell(c)}{2}\big)+\sinh^2 \big(\frac{\varepsilon}{2}\big)}{\sinh^2 \big(\frac{\varepsilon}{2}\big)\sin^2(\theta_0)}$.
\end{center}
By using the identities  $\cosh(2x)=2\cosh^2(x)-1=2\sinh^2(x)+1$, the last inequality can be expressed differently as follows:
\begin{equation}\label{new2}
2r_\varepsilon \geq \arccosh \bigg( \dfrac{\cosh{\ell(c)}+\cosh{\varepsilon}}{\sinh^2 \big(\frac{\varepsilon}{2}\big)\sin^2(\theta_0)}-1 \bigg) -\ell(c).
\end{equation}
The right hand of Inequality \ref{new2} can be considered as a function:
\begin{center} $f(x)=\arccosh(ax+b)-\arccosh{x}$ \end{center}
on the domain $[1,\infty)$, in which $$a=\frac{1}{\sinh^2\left(\frac{\varepsilon}{2}\right)\sin^2(\theta_0)}>0,$$ 
$$b=\frac{\cosh\varepsilon}{\sinh^2\left(\frac{\varepsilon}{2}\right)\sin^2(\theta_0)}-1\geq\frac{\cosh\varepsilon}{\sinh^2\left(\frac{\varepsilon}{2}\right)}-1=1+\frac{1}{\sinh^2\left(\frac{\varepsilon}{2}\right)}>1.$$This function reaches its maximum $x=1$. Hence Inequality \ref{new2} will hold if
\begin{equation}\label{new3}
2r_\varepsilon \geq \arccosh \bigg( \dfrac{1+\cosh{\varepsilon}}{\sinh^2 \big(\frac{\varepsilon}{2}\big)\sin^2(\theta_0)}-1 \bigg).
\end{equation}
For simplicity, we set $A:=\dfrac{1+\cosh{\varepsilon}}{\sinh^2 \big(\frac{\varepsilon}{2}\big)\sin^2(\theta_0)}$. Note that
\begin{center}
$\arccosh(A-1) < \arccosh A = \log (A+\sqrt{A^2-1})<\log(2A)$
\end{center}
and
\begin{center}
$\log(2A)=\log \bigg( \dfrac{2+2\cosh{\varepsilon}}{\sinh^2 \big(\frac{\varepsilon}{2}\big)\sin^2(\theta_0)} \bigg)=2\log{ \bigg(\dfrac{2}{\sin{\theta_0}}\bigg)}+2\log{\bigg(\dfrac{e^\varepsilon+1}{e^\varepsilon-1}\bigg)}$
\end{center}
in which $\log{\bigg(\dfrac{e^\varepsilon+1}{e^\varepsilon-1}\bigg)}<\log{ \bigg(\dfrac{1}{\varepsilon}\bigg)}+1$ for all $\varepsilon \in (0, 2]$. \\
Thus Inequality \ref{new3} certainly holds provided
\begin{center}
$r_\varepsilon \geq \log{ \bigg(\dfrac{2}{\sin{\theta_0}}\bigg)}+\log{ \bigg(\dfrac{1}{\varepsilon}\bigg)}+1$.
\end{center} 
Hence we set 
\begin{center}
$r_{\varepsilon}:=\log \bigg(\dfrac{1}{\varepsilon}\bigg)+\log{ \bigg(\dfrac{2e}{\sin{\theta_0}}\bigg)}$.
\end{center}
\textbf{(P2)} We consider the worst case scenario: $c$ is a complementary $\theta_0$-transversal of $\gamma_1$ and $\gamma_2$ (see Figure \ref{hh}). Consider the limit case which is when $\gamma$ and $\gamma_2$ are ultra-parallel. Now we orient $c$ from $\gamma_1$ to $\gamma_2$. Let $\psi$ denote the angle between $\gamma$ and the geodesic segment connecting the endpoint of $\gamma$ on $\gamma_1$ and the starting point of the oriented geodesic segment $c$. Let $\theta$ denote the angle between the extended part of $c$ toward $\gamma_2$ and the geodesic ray starting at the endpoint of $c$ and ending at the endpoint of $\gamma$ at infinity. Notice that the image of the orthogonal projection of $c$ on $\gamma$ lies between $\gamma_1$ and $\gamma_2$ if and only if the angle $\psi$ is acute. Since the sum of four inner angles in a quadrilateral is always less than $2\pi$,  $\psi \leq \frac{\pi}{2}$ holds if $\theta <\frac{\pi}{4}$. By using the same formula as in the proof of Lemma \ref{disjoint}, we have:
\begin{center}
$\cos{\theta}=\dfrac{\tanh{r_{\varepsilon}}-\cos{\theta_0}}{1-\tanh{r_{\varepsilon}}\cos{\theta_0}}$.
\end{center}
Hence $\theta <\frac{\pi}{4}$ holds provided
\begin{equation}\label{a}
\dfrac{\tanh{r_{\varepsilon}}-\cos{\theta_0}}{1-\tanh{r_{\varepsilon}}\cos{\theta_0}}>\dfrac{1}{\sqrt{2}}.
\end{equation}
\begin{center}
\begin{minipage}{\linewidth}
\begin{center}
\includegraphics[width=0.25\linewidth]{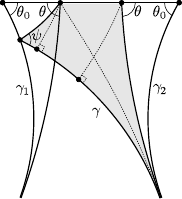}
\captionof{figure}{The worst case scenario.}
\label{hh}
\end{center}
\end{minipage}
\end{center}
By a small manipulation, Inequality \ref{a} is equivalent to the following:
\begin{center}
$r_{\varepsilon}>\dfrac{1}{2}\log\bigg(\dfrac{1}{\sin\theta_0}\bigg)+\log(1+\sqrt{2})+\log(1+\cos\theta_0)$.
\end{center}
And this last inequality holds by definition of $r_{\varepsilon}$.
\end{proof}
\section{Main tools}
Moduli space $\mathcal{M}_{g,n}$ we think of as the space of complete hyperbolic structures up to isometry on a punctured orientable topological surface $\Sigma_{g,n}$ of genus $g$ with $n$ punctures (with $2g+n\geq 3$). A cusp region of area $\xi$ is a portion of the surface isometric to $\{z: \text{Im} z \geq 1\}/{z\mapsto z+\xi}$. For  any $X$ in $\mathcal{M}_{g,n}$ and any positive number $\xi \leq 2$, we can define
 \begin{center}
$X^\xi:=\text{cl}(X\setminus \{\text{all cusp regions of area } \xi\})$.
\end{center}
In another word, $X^\xi$ is a surface of genus $g$ with $n$ boundary components and each connected component of its boundary is a horocycle of length $\xi$.
The following theorem is the main technical part of this paper:
\begin{thm}\label{maintool}
For any $X\in \mathcal{M}_{g,n}$, there exists a constant $K_X$ such that the following holds. For all $0<\varepsilon \leq 1$, $0<\xi \leq 1$ and any finite collection $\{c_i\}^N_{i=1}$ of geodesic arcs of average length $\bar{c}$ in $X^\xi$, there exists a closed geodesic $\gamma$ of length at most 
\begin{center}
$N(K_X+\bar{c}+10\log\frac{1}{\varepsilon}+8\log\frac{1}{\xi})$
\end{center}
which contains $\{c_i\}^N_{i=1}$ in its $2\varepsilon$-neighborhood.
\end{thm}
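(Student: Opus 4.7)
My plan is to build a piecewise geodesic loop from the arcs $c_i$ and short connecting pieces, then pass to its geodesic representative $\gamma$ and use Lemma \ref{epsilondistance} to show that $\gamma$ sits in the $2\varepsilon$-neighborhood of every $c_i$. The bound on the length of $\gamma$ will be obtained directly by summing the contributions of each piece, using $\sum_{i=1}^N \ell(c_i)=N\bar{c}$.

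First I would fix, once and for all, data that depends only on $X$: an ideal triangulation of $X$ together with the horocyclic truncation at level $\xi$ gives a decomposition of $X^\xi$ into finitely many once-punctured polygons of the type considered in Lemmas \ref{goodbad}--\ref{convexity}. Lemma \ref{goodbad} furnishes a universal angle $\theta_X>0$ and a universal side-length bound $\ell_X$ valid in each such piece; Lemma \ref{maxtravelling} applied with $\psi$ bounded away from $\pi/2$ shows that any geodesic arc crossing a cuspidal piece at such angles has length $O(\log\tfrac{1}{\xi})$; Lemma \ref{convexity} will be used later to keep connecting paths inside the horocyclic strip with good angles. All three lemmas thus produce the constants that combine into $K_X$, the $\log(1/\xi)$ term, and the bounded angle $\theta_0:=\theta_X$ needed to invoke Lemma \ref{epsilondistance}.

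Next I would \emph{stiffen} each arc $c_i$ at its two endpoints. At each endpoint I attach a geodesic tail of length exactly the threshold $r_\varepsilon=\log\tfrac{1}{\varepsilon}+\log\tfrac{2e}{\sin\theta_X}$ from Lemma \ref{epsilondistance}, chosen transverse to the local polygonal boundary at angle $\geq\theta_X$ (Lemma \ref{goodbad}); whenever a tail would exit $X^\xi$ I fold it along the cuspidal horocycle using Lemma \ref{convexity}, which costs at most $O(\log\tfrac{1}{\xi})$ extra. This gives an extended piecewise-geodesic arc $\tilde{c}_i$ of length $\ell(c_i)+2r_\varepsilon+O(\log\tfrac{1}{\xi})$ whose endpoints sit on two prescribed fence-geodesics. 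Using the bounded diameter of the non-cuspidal polygonal pieces and Lemma \ref{maxtravelling} inside the cuspidal ones, I then connect the endpoint of $\tilde{c}_i$ to the starting point of $\tilde{c}_{i+1}$ by a geodesic arc of length bounded by a constant of the form $K'_X+O(\log\tfrac{1}{\xi})$ depending only on $X$ and $\xi$. Summing and closing up, the resulting loop has length at most $\sum_i\ell(c_i)+N(K_X+10\log\tfrac{1}{\varepsilon}+8\log\tfrac{1}{\xi})$, which is the claimed bound; straightening to the free-homotopy geodesic $\gamma$ only decreases this. (If the loop is trivial I insert, once, a fixed non-trivial short loop of $X$, whose length is absorbed in $K_X$.)

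The real obstacle is verifying the $2\varepsilon$-density conclusion: I must show that Lemma \ref{epsilondistance} truly applies at every $c_i$, i.e.\ that the geodesic $\gamma$ crosses both of the tails attached to $c_i$ rather than homotoping around them. The angle hypothesis $\measuredangle\geq\theta_X$ and the length hypothesis $r_\varepsilon$ are built into the construction; what needs to be checked is a transversality statement in $\mathbb{H}$. For this I would lift a fundamental piece of $\gamma$ containing (the lift of) $c_i$ and its two tails, observe by Lemma \ref{disjoint} that the complete geodesics carrying the two tails are ultraparallel, and argue that the endpoints at infinity of the lift of $\gamma$ must lie in the two complementary components on either side of $c_i$ because the next connector and $\tilde{c}_{i\pm 1}$ are anchored on the ``outside'' of the corresponding fence. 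Once this is granted, property (P1) of Lemma \ref{epsilondistance} gives $c_i\subset B_{2\varepsilon}(\gamma)$ for each $i$, which finishes the proof. This combinatorial check that $\gamma$ threads through the two tails of every $c_i$ is where I expect the construction of the connectors to require the most care.
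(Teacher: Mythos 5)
Your overall skeleton (extend each $c_i$ by tails of length $r_\varepsilon$ to reach a fixed one-dimensional frame at good angles, connect consecutive extended arcs, then use Lemma~\ref{epsilondistance} to certify that the free-homotopy geodesic passes within $2\varepsilon$ of every $c_i$) is the right one and is the approach of the paper, which sets up $r_\varepsilon$, $\theta_0$, and then invokes the connecting algorithm of \cite{BPS16}. But there is a genuine gap at the one step that is new in the punctured setting and is the reason this theorem is not an immediate corollary of \cite{BPS16}. You write that you attach a tail ``chosen transverse to the local polygonal boundary at angle $\geq\theta_X$'' and that whenever a tail would exit $X^\xi$ you ``fold it along the cuspidal horocycle using Lemma~\ref{convexity}, which costs at most $O(\log\frac{1}{\xi})$ extra.'' Neither part is available. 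You cannot \emph{choose} the angle of the tail: the tail is the geodesic continuation of $c_i$, so the angle at the first boundary crossing is forced, and Lemma~\ref{goodbad} only promises a good angle at one endpoint of a traversal, so you must keep extending until a good crossing occurs. The paper's Part~1 classifies the continuations and shows that in cases~1--4 (``class~A'') the extra extension is bounded by a constant plus $O(\log\frac{1}{\varepsilon}+\log\frac{1}{\xi})$; but in case~5 (``class~B''), where the continuation meets the deep horocycle $h$ at angle close to $\pi/2$, Lemma~\ref{maxtravelling} gives a bound that blows up as $\psi\to\pi/2$, so the extension length is genuinely unbounded. ``Folding along the horocycle'' does not fix this: a folded path is no longer geodesic, and you need a geodesic segment whose endpoints lie on the frame with good angles in order to apply Lemma~\ref{disjoint} and Lemma~\ref{epsilondistance}. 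The paper's actual fix is Part~2's replacement step: when a direction is class~B, $c_i$ is replaced by a nearby geodesic arc $\zeta_i$ (constructed in sub-cases~BA and~BB via Lemma~\ref{convexity}) that stays within $\varepsilon$ of $c_i$ and whose extensions in both directions are controlled; then Lemma~\ref{epsilondistance} applied to $\zeta_i$ yields the $2\varepsilon$-neighborhood conclusion for $c_i$. This replacement argument is the technical heart of the theorem and is missing from your proposal.

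A secondary mismatch: you propose decomposing $X^\xi$ by an ideal triangulation, but an ideal triangulation does not produce the polygons of Lemma~\ref{goodbad} (which require geodesic edges meeting at interior vertices with angles $<\pi$). The paper instead uses a filling closed geodesic $\gamma_0$, cutting $X$ into ordinary and once-punctured polygons; this frame is essential because the extended arcs $\zeta_i'$ must end \emph{on $\gamma_0$} at angles $\geq\theta_0$ for the connecting algorithm of \cite{BPS16} to run unchanged. You also attempt to re-derive the connecting step by hand; you correctly flag that threading $\gamma$ through both tails of each $c_i$ ``requires the most care,'' and in the paper this is handled by citing the algorithm of \cite{BPS16} directly rather than reproving it. Without the class~B replacement and with the ideal-triangulation frame, the proposal as written does not close.
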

\begin{proof} The proof is structured in three parts. The first part will introduce some necessary geometric quantities and a classification of geodesics traveling through (punctured) polygons on the surface $X$ forming big or small angles with the sides of the polygons. The second part is about the details of the arc-replacement technique and some upper bounds of the length of extended arcs. The final part is a recap of parts 1 and 2 followed by the construction of the closed geodesic $\gamma$. 

\textbf{Part 1: Setup.}

We take a closed geodesic $\gamma_0$ on $X$ of minimal length such that $X \setminus \gamma_0$ consists of a finite collection of ordinary polygons $\{P_i\}_{i\in I}$ and once-punctured polygons $\{P_i\}_{i \in J}$ ($I$ and $J$ are two disjoint finite index sets). Recall that, for each polygon $P_i$ ($i\in I\cup J$) we have the constants $\theta_{P_i}$ and $\ell_{P_i}$ as mentioned in Lemma \ref{goodbad}. Also in each ordinary polygon $P_i$, we denote by $D_{P_i}$ the value of its intrinsic diameter. Note that there is no intrinsic diameter in once-punctured polygons. We define:
\begin{center}
$\theta_0:=\min\limits_{i\in I\cup J} \{ \theta_{P_i} \}$ and $D:=\max\limits_{i\in I, j\in J} \{D_{P_i}, \ell_{P_j}\}$.

\end{center} 
In this part, we aim to define a classification for geodesics traveling through polygons in the following way.

We begin by defining a closed horocycle that lies inside a once-punctured polygon (hence $\gamma_0$ and this horocycle have no intersection) such that the distance between this horocycle and the horocycle of length $\xi$ (namely $h_{\xi}$) is at least 
\begin{center}
$r_{\varepsilon}:=\log(\frac{1}{\varepsilon})+\log \big(\frac{2e}{\sin\theta_0}\big)$.
\end{center}
Since $\gamma_0$ wraps around each cusp at most once, it will not cross transversely the horocycle of length $1$ in each cusp region. We also note that $\xi$ is less than $1$. Hence, one option which satisfies the above condition is the horocycle which is at distance $r_{\varepsilon}$ from $h_{\xi}$. We denote this horocycle by $h$. 
Since the decay of the length of horocycle in a cusp is $e$, 
\begin{center}
$\ell(h)=\dfrac{\xi}{e^{r_\varepsilon}}=\dfrac{\varepsilon\xi}{2e}\sin\theta_0$.
\end{center}
Now, let $c$ be an arbitrary geodesic arc on $X$, we extend  $c$ by $r_\varepsilon$ in one direction to get a new arc $c'$ and the new endpoint $p'$. Then we continue to extend $c'$ from $p'$. In the process of extending, the geodesic can intersect $\gamma_0$ several times and form angles. An intersection is called a \textbf{good intersection} if the acute angle at it is at least $\theta_0$, and otherwise, it will be called a \textbf{bad intersection}. The extension will stop at the first good intersection from $p'$. By Lemma \ref{goodbad}, the extensions can be divided into 5 cases as follows:

1. From $p'$, the previous intersection is bad and the next intersection is good.
\begin{center}
\begin{minipage}{\linewidth}
\begin{center}
\includegraphics[width=0.5\linewidth]{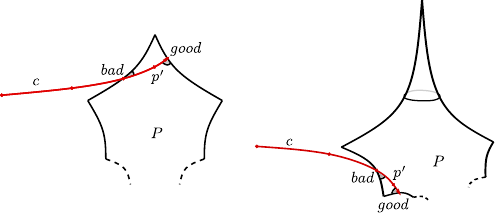}
\captionof{figure}{ Case 1.}
\end{center}
\end{minipage}
\end{center}

2. From $p'$, the previous intersection is good, and the next intersection is bad.
\begin{center}
\begin{minipage}{\linewidth}
\begin{center}
\includegraphics[width=0.4\linewidth]{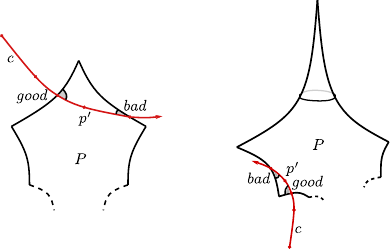}
\captionof{figure}{Case 2. }
\end{center}
\end{minipage}
\end{center}

3. $p'$ lies inside an ordinary polygon, the previous intersection and the next intersection are both good (see Figure \ref{2}).

4. $p'$ lies inside a once-punctured polygon $P$, the previous intersection and the next intersection are both good (see Figure \ref{2}) and so that the geodesic arc, namely $c''$, between these two intersections is not too long, more precisely, this arc either intersects the horocycle $h$ at an angle less than a given angle $\psi$ or does not intersect $h$.

5. $p'$ lies inside a once-punctured polygon and if we continue to extend $c'$ from $p'$, it will intersect the horocycle $h$ at an angle at least $\psi$ (see Figure \ref{2}).
\begin{center}
\begin{minipage}{\linewidth}
\begin{center}
\includegraphics[width=0.6\linewidth]{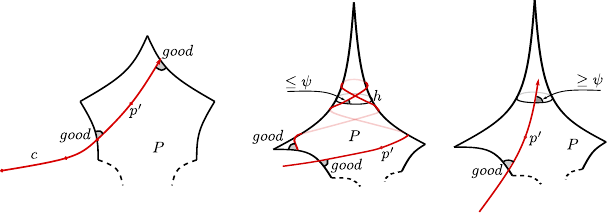}
\captionof{figure}{Cases 3,4 and 5 respectively.}
\label{2}
\end{center}
\end{minipage}

\end{center}
Now in order to stop the extension, by Lemma \ref{goodbad} and the definition of $D$ above, the distance we need to extend from $p'$ is at most $D$ (in cases 1 and 3), and $2D$ (in case 2). In case 4, let $s_0\in \partial{P}$ such that 
\begin{center}
$d_{X}(s_0, h)=\max \limits_{s\in \partial{P}} \{ d_{X}(s, h) \}$.
\end{center}
Let $d_P$ be the distance from $s_0$ to the closed horocycle of length 1 of the same cusp. Note that the distance between this horocycle and $h$ is $\log\left(\frac{2e}{\varepsilon \xi \sin\theta_0}\right)$. Thus

\begin{center}
$d_{X}(s_0, h) = d_P+\log\left(\dfrac{2e}{\varepsilon \xi \sin\theta_0}\right)$.
\end{center}
Then applying Lemma \ref{maxtravelling} and the inequality $\arccosh(x-1)<\log(2x)$ we have
\begin{equation}\label{c''}
\ell(c'') \leq \arccosh\bigg(\dfrac{2e^{2d_P+2\log\left(\frac{2e}{\varepsilon \xi \sin\theta_0}\right)}}{\cos^2{\psi}}-1\bigg)<2d_P+2\log\left(\frac{2e}{\varepsilon \xi \sin\theta_0}\right)+2\log\bigg( \dfrac{2}{\cos\psi}\bigg).
\end{equation}
Note that, in part 2, we will define $\psi$ as the angle formed by $\tilde{h}$ and $\tilde{\eta}_1$ (see Figure \ref{caseB}). By simple computations, we obtain
\begin{center} $\psi =\arccos \left(\dfrac{\frac{\varepsilon\xi}{e}\sin\theta_0}{1+\frac{\varepsilon^2\xi^2}{4e^2}\sin^2(\theta_0)}\right)$.
\end{center}

From there we have
\begin{equation}\label{sdf}
2\log\bigg( \dfrac{2}{\cos\psi}\bigg)=2\log\bigg(1+\dfrac{\varepsilon^2\xi^2}{4e^2}\sin^2(\theta_0) \bigg)+2\log\bigg(\dfrac{2e}{\varepsilon\xi\sin\theta_0}\bigg)<2+2\log\bigg(\dfrac{2e}{\varepsilon\xi\sin\theta_0}\bigg).
\end{equation}
Combining \ref{c''} and \ref{sdf}, one has
\begin{center}
$\ell(c'')<2d_P+4\log\bigg(\dfrac{2e}{\varepsilon\xi\sin\theta_0}\bigg)+2$.
\end{center}
Moreover, if one denotes by $d_{\gamma_0}$ the maximal value of $\{d_{P_i}\}_{i\in J}$, then
\begin{center}$m_A:=2D+2d_{\gamma_0}+2+4\log\bigg(\dfrac{2e}{\varepsilon\xi\sin\theta_0}\bigg)$
\end{center} 
is an upper bound on the length of the extension in \textbf{class A} (i.e., cases 1, 2, 3, and 4). Note that, in \textbf{class B} (i.e., case 5), the length of the extension is unbounded when $\measuredangle(c',h)$ goes to $\frac{\pi}{2}$.

\textbf{Part 2: Replacements and estimates.}

Now, let $c$ be an arbitrary geodesic arc in the collection $\{c_i\}^N_{i=1}$. Denote the two endpoints of $c$ by $p$ and $q$. We extend $c$ by $r_\varepsilon$ in both directions to get a new geodesic arc $c'$. We now look at different cases.

\underline{Case A}: The extensions in both directions are in class A. 

In order to get good intersections in both directions, we need to extend $c'$ by at most $m_A$ for each of its directions. Hence an upper bound on the length of $c$ after being extended is
\begin{center}
$\ell(c)+2r_\varepsilon+2m_A$
\end{center}
or more precisely,
\begin{equation}\label{b}
\ell(c)+10\log\dfrac{1}{\varepsilon}+8\log \dfrac{1}{\xi}+4D+4d_{\gamma_0}+4+10\log \bigg(\dfrac{2e}{\sin\theta_0}\bigg).
\end{equation}

\underline{Case B}: There is a direction where the extension is in class B. 

Let $p'$ be the endpoint of $c'$ in this direction, we can suppose $p'$ lies inside a once-punctured polygon, namely $P$.

What we aim to do is to replace $c$ with another geodesic arc, which is very close to $c$ and controlled in both directions (i.e. the extension in each direction is in class A). Denote the complete geodesic containing $c$ by $\gamma_c$. We assume that the horizontal line  $y=\frac{2e}{\varepsilon \xi \sin\theta_0}i$, namely $\tilde{h}$, is a lift of the closed horocycle $h$ of length $\frac{\varepsilon \xi}{2e}\sin\theta_0$ in $P$. From there we can suppose the complete geodesic $\tilde{\gamma_c}$ with an endpoint at $0$, forming an angle at least $\psi$ with  $\tilde{h}$, is a lift of $\gamma_c$. Hence $\tilde{\gamma_c}$  lies between $\tilde{\eta}_1$ and $\tilde{\eta}_2$, in which $\tilde{\eta}_1$ and $\tilde{\eta}_2$ are the complete geodesics with a common endpoint at 0, containing $\frac{2e}{\varepsilon \xi \sin\theta_0}i+1$ and $\frac{2e}{\varepsilon \xi \sin\theta_0}i-1$, respectively.

Now we construct lifts of other points from there. Let $\tilde{p}$ and $\tilde{q}$ be lifts of $p$ and $q$, respectively (see Figure \ref{caseB}). Let $\tilde{\gamma}_1$ and $\tilde{\gamma}_2$ be geodesics going through $\tilde{p}$ and $\tilde{q}$, respectively, and orthogonal to the axis $x=0$. Let $\tilde{p}_i:=\tilde{\gamma}_1 \cap \tilde{\eta}_i$ and $\tilde{q}_i:=\tilde{\gamma}_2 \cap \tilde{\eta}_i$, for $i=1,2$.

For  $i=1,2$, we denote by $p_iq_i$ the projection of $\tilde{p_i}\tilde{q_i}$ to the surface $X$. Extend the geodesic arc $p_iq_{i}$ by $r_\varepsilon$ in both directions to get a new geodesic arc $p'_iq'_{i}$. If $p'_i \notin P$, we only need to extend by an extra at most $\frac{\varepsilon \xi}{e}\sin\theta_0+2\ell(\gamma_0)$ to get into $P$. This can be proved by using the inequality in the triangle formed by $\tilde{\eta}_i$, the geodesic segment $\tilde{p_i}'\tilde{p}'$ and a lift of $\gamma_0$ (one of the boundary components of the shaded part in Figure \ref{caseB}).
\begin{center}
\begin{minipage}{\linewidth}
\begin{center}
\includegraphics[width=0.7\linewidth]{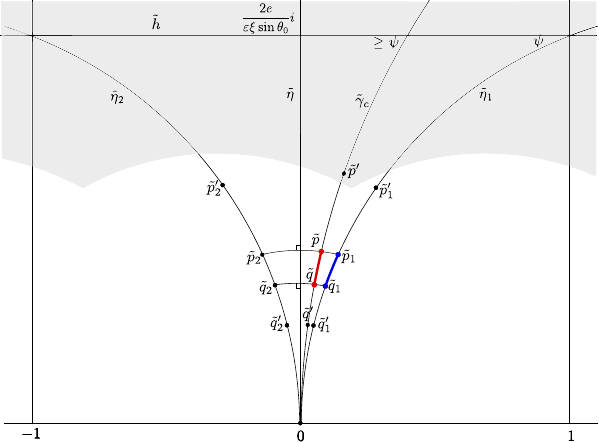}
\captionof{figure}{Lifting to $\mathbb{H}$ in Case B. The shaded part is a lift of polygon $P$.}
\label{caseB}
\end{center}
\end{minipage}
\end{center}
There are two different sub-cases of case B:

\underline{Sub-case BA}: The extension in the direction of either $q_1$ or $q_2$ is in class A.

Without loss of generality, we assume that the extension in the direction of $q_1$ is in class A. Note that the geodesic segments $\tilde{p}\tilde{p}_1$ and $\tilde{q}\tilde{q}_1$ are of length at most $\frac{\varepsilon \xi}{e}{\sin\theta_0}$. Since $\xi \leq 1$ and $\sin\theta_0 \leq 1$, $\frac{\varepsilon \xi}{e}{\sin\theta_0}< \frac{\varepsilon}{2}$. Thus any geodesic containing $p_1q_1$ in its $\varepsilon$-neighborhood contains $c$ in its $\big(\frac{\varepsilon}{2}+\varepsilon\big)$-neighborhood. Hence, in this case, we will replace $c$ with $p_1q_1$.

Recall that we extended $p_1q_1$ by $r_{\varepsilon}$ in both directions to obtain the geodesic arc $p'_1q'_1$. In order to get good intersections in both directions, we continue to extend $p'_1q'_1$ by at most 
\begin{center}
$2m_A+\dfrac{\varepsilon \xi}{e}{\sin\theta_0}+2\ell(\gamma_0)$ .
\end{center}

Hence an upper bound on the length of $p_1q_1$ after being extended is
\begin{center}
$\ell(c)+2r_\varepsilon+2m_A+\dfrac{\varepsilon \xi}{e}{\sin\theta_0}+2\ell(\gamma_0)$ 
\end{center}
which is less than or equal to
\begin{equation}\label{c}
\ell(c)+10\log\dfrac{1}{\varepsilon}+8\log \dfrac{1}{\xi}+4D+4d_{\gamma_0}+4+10\log \bigg(\dfrac{2e}{\sin\theta_0}\bigg)+\dfrac{\sin\theta_0}{e}+2\ell(\gamma_0).
\end{equation}
\underline{Sub-case BB}: The extensions in the directions of $q_1$ and $q_2$ are both in class B. 

Since $\eta, \eta_1$, and $\eta_2$  asymptotic in the direction of $q, q_1$ and $q_2$, length of the geodesic arc orthogonal to $\eta$ and connecting $q'_1$ and $q'_2$ is very small, roughly less than $\frac{\varepsilon \xi}{{2e}}{\sin\theta_0}$. Thus we can suppose that $q'_1$ and $q'_2$ lie in the same once-punctured polygon, denoted by $P'$. Let $h'$ be the closed horocycle of length $\frac{\varepsilon \xi}{{2e}}{\sin\theta_0}$ in $P'$. From there we construct a lift of $h'$, denoted by $\tilde{h'}$. We keep all the notations $A_1, A_2, B_1, B_2, C_1, C_2$ and $u$ as introduced in Lemma \ref{convexity} (see Figure \ref{caseBB} below). 
\begin{center}
\begin{minipage}{\linewidth}
\begin{center}
\includegraphics[width=0.8\linewidth]{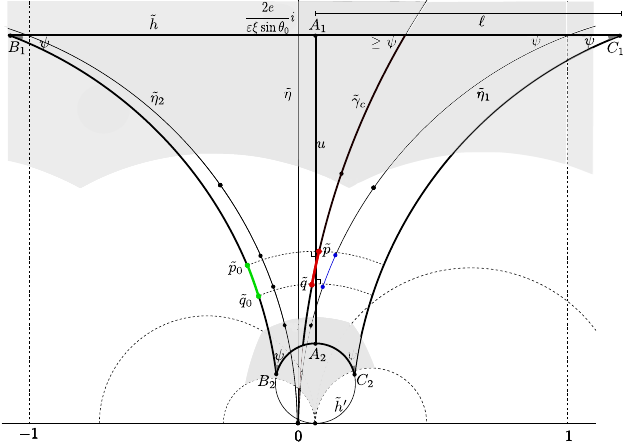}
\captionof{figure}{Lifting to $\mathbb{H}$ in sub-case BB.}
\label{caseBB}
\end{center}
\end{minipage}

\end{center}
Now we would like to apply Lemma \ref{convexity} to the two horocycles $\tilde{h}$ and $\tilde{h'}$ with the angle $\psi$. Recall that $u=\ell(A_1A_2)$ is the distance from $\tilde{h}$ to $\tilde{h'}$. One can estimate a lower bound and an upper bound on $u$ as follows: 
\begin{center}
$2\log\dfrac{{4e}}{\varepsilon\xi{\sin\theta_0}} \leq u\leq \ell(c)+2r_{\varepsilon}+ \dfrac{\varepsilon \xi}{{e}}{\sin\theta_0}+2\ell(\gamma_0)+2\bigg(d_{\gamma_0}+\log{\dfrac{{2e}}{\varepsilon \xi{\sin\theta_0}}}\bigg).$
\end{center}
This implies that:
\begin{center}
$0< u\leq \ell(c)+4\log{\dfrac{1}{\varepsilon}}+2\log{\dfrac{1}{\xi}}+k_X$
\end{center}
in which $k_X:=2d_{\gamma_0}+4\log \frac{2e}{\sin\theta_0}+\frac{\sin\theta_0}{e}+2\ell(\gamma_0)$. 

Recall that $\ell:=\frac{\ell(B_1C_1)}{2}$. Then by Lemma \ref{convexity}, we have 
\begin{center}
$\ell=\sqrt{\tan^2\psi+e^{-u}+1}-\tan{\psi}=\dfrac{e^{-u}+1}{\sqrt{\tan^2\psi+e^{-u}+1}+\tan{\psi}}< \dfrac{2}{\sqrt{\tan^2\psi+1}+\tan{\psi}}$
\end{center}
Thus since $\psi =\arccos \left(\dfrac{\frac{\varepsilon\xi}{e}\sin\theta_0}{1+\frac{\varepsilon^2\xi^2}{4e^2}\sin^2(\theta_0)}\right)$, $0<\varepsilon \leq 1$, $0<\xi \leq 1$ and $\frac{2e}{\sin\theta_0} >2e$ we have
\begin{center}
$\ell < \dfrac{2}{\sqrt{\tan^2\psi+1}+\tan{\psi}}=\dfrac{2\varepsilon \xi}{\frac{2e}{\sin\theta_0}}<2e^{-1}$,
\end{center} 
hence
\begin{center}
$\ell(B_1B_2)=\ell(C_1C_2)\leq 2\ell+u < 4e^{-1}+\ell(c)+4\log\dfrac{1}{\varepsilon}+2\log\dfrac{1}{\xi}+k_X$.
\end{center}
Now we draw a geodesic going through $\tilde{p}$, resp. $\tilde{q}$, orthogonal to $A_1A_2$, meeting $B_1B_2$ at a point, denoted by $\tilde{p}_0$, resp. $\tilde{q}_0$ (see the Figure \ref{caseBB}). We also denote by $\tilde{\zeta}$ the geodesic segment  $\tilde{p}_0\tilde{q}_0$. By projecting the geodesic segment $\tilde{\zeta}$ to $X$, we get a geodesic arc on $X$, denoted by $\zeta$. Note that the geodesic segments $\tilde{p}\tilde{p}_0$ and $\tilde{q}\tilde{q}_0$ are of length at most $2\ell<\frac{4\varepsilon \xi \sin\theta_0}{2e}<\varepsilon$. Thus any geodesic containing $\zeta$ in its $\varepsilon$-neighborhood contains $c$ in its $2\varepsilon$-neighborhood. In this case, we will replace $c$ with $\zeta$.

 Since $B_1B_2$ contains $\tilde{\zeta}$, we will extend $B_1B_2$ instead of $\tilde{\zeta}$.  By Lemma \ref{maxtravelling}, in order to get good intersections in both directions, we need to extend $B_1B_2$ in each direction by a distance $v$, where $v$ satisfies:
\begin{center}
$\log\left(\dfrac{2e}{\varepsilon \xi \sin\theta_0}\right)+\log\dfrac{1+\sin\psi}{1-\sin\psi}\leq v\leq \dfrac{1}{2}\arccosh \left(\dfrac{2e^{2d}}{\cos^2\psi}-1\right)+\dfrac{1}{2}\log\dfrac{1+\sin\psi}{1-\sin\psi}$
\end{center}
where $d:=d_{\gamma_0}+\log\left(\dfrac{2e}{\varepsilon \xi \sin\theta_0}\right)$. Similarly to \ref{c''} and \ref{sdf}, one can show that:
\begin{center}
$\log\dfrac{1}{\varepsilon}+\log\left(\dfrac{2e}{\xi \sin\theta_0}\right)+\log\dfrac{1+\sin\psi}{1-\sin\psi}\leq v < d_{\gamma_0}+1+3\log\dfrac{1}{\xi}+3\log\dfrac{1}{\varepsilon}+3\log\left(\dfrac{2e}{ \sin\theta_0}\right)$
\end{center}
Since the lower bound of $v$ is greater than $r_{\varepsilon}$, we do not need to extend the segment $B_1B_2$ in two steps as in the previous cases.

In this way, we obtain an upper bound on the length of $B_1B_2$ after the extension:
\begin{center}
$4e^{-1}+\ell(c)+4\log\dfrac{1}{\varepsilon}+2\log\dfrac{1}{\xi}+k_X+2d_{\gamma_0}+2+6\log\dfrac{1}{\xi}+6\log\dfrac{1}{\varepsilon}+6\log\left(\dfrac{2e}{ \sin\theta_0}\right)$
\end{center}
or
\begin{equation}\label{d}
\ell(c)+10\log\dfrac{1}{\varepsilon}+8\log\dfrac{1}{\xi}+4d_{\gamma_0}+10\log \left(\frac{2e}{\sin\theta_0}\right)+\frac{\sin\theta_0}{e}+2\ell(\gamma_0)+4e^{-1}+2.
\end{equation}
Finally, after comparing upper bounds \ref{b}, \ref{c}, and \ref{d} in cases A, BA, and BB respectively, we set 
\begin{center}
$M(c,\varepsilon,\xi,X):=\ell(c)+10\log\dfrac{1}{\varepsilon}+8\log \dfrac{1}{\xi}+k'_X$
\end{center}
the upper bound of all cases, in which $k'_X:=4D+4d_{\gamma_0}+4+10\log \big(\frac{2e}{\sin\theta_0}\big)+\frac{\sin\theta_0}{e}+2\ell(\gamma_0)$ is a quantity that depends only on $X$.

\textbf{Part 3: Construction of the geodesic $\gamma$.} 

Since $X$ is orientable, $\gamma_0$ has two opposite sides denoted by $\gamma_0^{+}$ and $\gamma_0^{-}$. Let $\mu_{\pm}$ be an oriented geodesic arc from $\gamma_0$ to itself, orthogonal $\gamma_0$ in both endpoints, and which leaves and returns to $\gamma_0^{\pm}$. Note that these two geodesic arcs $\mu_{+}$ and $\mu_{-}$ are constructed using a finite cover of $X$ which lifts $\gamma_0$ to a simple closed geodesic. The lengths of these two arcs are constants depending on $X$.

In the previous parts, we replaced the collection  $\{c_i\}^N_{i=1}$ by a new collection, denoted by $\{\zeta_i\}^N_{i=1}$. We also defined a collection of the extended geodesic arcs of $\{\zeta_i\}^N_{i=1}$, denoted by $\{\zeta'_i\}^N_{i=1}$. In this collection, each element $\zeta'_i$, is of length at most $M(c,\varepsilon,\xi,X)$, has endpoints lying on $\gamma_0$ and forms good angles ($\geq \theta_0$) with $\gamma_0$, and is an extension of $\zeta_i$ by at least $r_{\varepsilon}$ in each direction. In short, each $\zeta_i$ is an example of the geodesic segment $c$ in Lemma \ref{epsilondistance}. Furthermore, we showed that any geodesic containing $\zeta_i$ in its $\varepsilon$-neighborhood contains $c_i$ in its $2\varepsilon$-neighborhood.

 With the new collection  $\{\zeta_i\}^N_{i=1}$ and its extension $\{\zeta'_i\}^N_{i=1}$ in hand, following exactly the same algorithm in the proof of \cite[Theorem 2.4]{BPS16}, one can construct a closed piecewise geodesic forming from these arcs with suitable choices of subsegments of the filling closed geodesic $\gamma_0$ and $\mu_{\pm}$ as following steps.
\begin{itemize}
\item Cyclically ordering and orienting each ${\zeta'_i}$ arbitrarily. 

\item If $\zeta'_{i+1}$ starts on the opposite side of $\gamma_0$ that $\zeta'_{i}$ ends on, we join the endpoint of $\zeta'_{i}$ to the starting point of $\zeta'_{i+1}$ by the shortest subarc of $\gamma_0$ which does this. 

\item If $\zeta'_{i+1}$ starts and $\zeta'_{i}$ ends on the same side, say $\gamma_0^{+}$, of $\gamma_0$, we join the endpoint of $\zeta'_{i}$ to the starting point of $\mu_{-}$ by the shortest subarc of $\gamma_0$ which does this. Then we join the endpoint of $\mu_{-}$ to the starting point of $\zeta'_{i+1}$ by the shortest subarc of $\gamma_0$ which does this.  

\end{itemize}
Note that each connecting shortest subarc introduced in each step is of length at most $\frac{\gamma_0}{2}$. The resulting closed piecewise geodesic, denoted by $\gamma'$, is contained in the $\varepsilon$-neighborhood of $\gamma$, where $\gamma$ is the unique closed geodesic in the free homotopy class of $\gamma'$. Due to the above construction, $\gamma$ is a nontrivial loop . Denote $\bar{c}$ the average length of the collection $\{c_i\}^N_{i=1}$, the length of $\gamma$ is bounded above by
 $$N(K_X+\bar{c}+10\log\frac{1}{\varepsilon}+8\log\frac{1}{\xi}),$$ where $K_X$ is a constant depending on $X$.
\end{proof}
A geodesic arc on $X$ is called a doubly truncated orthogeodesic on $X^\xi$ if it is perpendicular to the horocyclic boundary of $X^\xi$ at its endpoints. As a consequence of Theorem \ref{maintool}, we can also construct a doubly truncated orthogeodesic $\smallO$ with the same properties:\begin{thm}\label{maintool2}
For any $X\in \mathcal{M}_{g,n}$, there exists a constant $K_X$ such that the following holds. For all $0<\xi \leq 1$,   $0<\varepsilon \leq \min\{\log\frac{1}{\xi},1\}$, and any finite collection $\{c_i\}^N_{i=1}$ of geodesic arcs of average length $\bar{c}$ in $X^\xi$, there exist a doubly truncated orthogedesic $\smallO$ of length at most
\begin{center}
$(N+1)(K_X+\bar{c}+10\log\frac{1}{\varepsilon}+8\log\frac{1}{\xi})$
\end{center}
containing $\{c_i\}^N_{i=1}$ in its $2\varepsilon$-neighborhood.
\end{thm}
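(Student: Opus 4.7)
The plan is to bootstrap from Theorem \ref{maintool} by augmenting the collection with one auxiliary doubly truncated orthogeodesic $o_0$ and opening the closed loop produced in that proof into an arc ending at the two endpoints of $o_0$. First I would fix, depending only on $X$ and $\xi$, a reference doubly truncated orthogeodesic $o_0$ on $X^\xi$. Since $X^\xi$ is a compact surface with nonempty horocyclic boundary, such a shortest orthogeodesic exists and its length is bounded by the diameter of $X^\xi$, itself controlled by $\Delta_X + 2\log(1/\xi)$ for some $\Delta_X$ depending only on $X$ (the cusp neighborhoods between the horocycles of length $1$ and of length $\xi$ contribute depth $\log(1/\xi)$). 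This contribution is absorbed into the additive constants of the final bound after passing from $N$ to $N+1$ arcs.

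Next I would feed the enlarged collection $\{o_0,c_1,\ldots,c_N\}$ of $N+1$ arcs into Parts~1 and~2 of the proof of Theorem \ref{maintool}. For $c_1,\ldots,c_N$ this produces replacements $\zeta_i$ with well-behaved extensions $\zeta'_i$ hitting $\gamma_0$ at angles at least $\theta_0$; no such taming is needed for $o_0$, since it already meets $\partial X^\xi$ perpendicularly, and that perpendicularity plays, at the boundary, the role played by the angle condition at $\gamma_0$ for the other arcs. Instead of applying the BPS16 connecting algorithm to close the concatenation into a loop, I would apply a relative-homotopy variant: the algorithm still produces interpolation segments along $\gamma_0$ between consecutive $\zeta'_i$, but now the overall piecewise-geodesic path $\Pi$ begins at one endpoint of $o_0$ on $\partial X^\xi$, traverses $o_0$ and each $\zeta_i$ in some order, and terminates at the other endpoint of $o_0$ on $\partial X^\xi$. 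The unique geodesic representative of the relative homotopy class of $\Pi$ with perpendicular endpoints on $\partial X^\xi$ is the desired doubly truncated orthogeodesic $\smallO$.

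The length estimate is then immediate: $\ell(\smallO) \leq \ell(\Pi)$, because the perpendicular geodesic representative minimizes length in its relative homotopy class, and the bound on $\ell(\Pi)$ is exactly the one coming from Theorem \ref{maintool} applied to $N+1$ arcs, giving $(N+1)(K_X+\bar{c}+10\log(1/\varepsilon)+8\log(1/\xi))$ after absorbing $\ell(o_0)$. The density property transfers as well: each $c_i$ lies within $\varepsilon$ of $\zeta_i$ by the replacement step, each $\zeta_i$ lies within $\varepsilon$ of $\smallO$ by property (P1) of Lemma \ref{epsilondistance}, and the two $\varepsilon$'s combine to the required $2\varepsilon$-neighborhood.

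The main obstacle will be justifying the relative-homotopy variant of the BPS16 closing step and verifying that property (P1) of Lemma \ref{epsilondistance} still applies to the two ``boundary'' interpolation segments located near the endpoints of $o_0$. For those segments, the role of the bounding geodesics $\gamma_1,\gamma_2$ in Lemma \ref{epsilondistance} must be played on one side by a lift of $\gamma_0$ and on the other by a lift of $\partial X^\xi$; I expect the hypothesis $\varepsilon \leq \log(1/\xi)$ to enter precisely here, to guarantee that the perpendicular extensions of length $r_\varepsilon$ fit inside the cusp region between $\partial X^\xi$ and the first transverse crossing of $\gamma_0$. Once that technicality is cleared, the length accounting and the inheritance of density reduce to the bookkeeping already present in the proof of Theorem \ref{maintool}.
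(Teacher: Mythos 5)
Your high-level strategy is right — introduce auxiliary orthogeodesic pieces, open the loop, and take the geodesic representative of a relative homotopy class meeting $\partial X^\xi$ perpendicularly — but the implementation diverges from the paper's in a way that creates a real gap. The paper does \emph{not} augment the collection by a single doubly truncated orthogeodesic $o_0$. Instead it adds \emph{two} shortest \emph{one-sided} orthogeodesic arcs $\zeta'_0$ and $\zeta'_{N+1}$, each running from the horocycle of length~$1$ (not length~$\xi$) to $\gamma_0$, and places them at the two ends of the linearly ordered sequence $\{\zeta'_i\}_{i=0}^{N+1}$ before running the BPS16 connecting algorithm. This choice is not cosmetic: the connecting algorithm requires each arc in the chain to have endpoints on $\gamma_0$ so that consecutive arcs can be joined by small segments along $\gamma_0$. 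A full orthogeodesic $o_0$ with \emph{both} endpoints on $\partial X^\xi$ has no endpoint on $\gamma_0$, so there is nothing to splice it to. Moreover, you cannot ``feed $o_0$ through Parts~1 and~2'' as written, since that step extends each arc by $r_\varepsilon$ in both directions toward a good intersection with $\gamma_0$; for an arc already perpendicular to $\partial X^\xi$, that extension points into the cusp and never meets $\gamma_0$, so the procedure is not defined on $o_0$.

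The second discrepancy is the role of the hypothesis $\varepsilon\leq\log\frac1\xi$. You expect it to make the perpendicular extensions of length $r_\varepsilon$ fit inside the cusp between $\partial X^\xi$ and the first crossing of $\gamma_0$; but $r_\varepsilon$ is of order $\log\frac1\varepsilon$, which is much larger than $\varepsilon$, so that argument does not go through. In the paper's proof the hypothesis is used differently and more delicately: the connecting algorithm first produces a doubly truncated orthogeodesic $\smallO_1$ whose endpoints lie on the horocycles of length~$1$; by Lemma~\ref{epsilondistance}~(P2) this arc may fall short of containing $c_1$ or $c_N$ in its $2\varepsilon$-neighborhood by a segment of length at most $\varepsilon$ on each side; one then extends $\smallO_1$ all the way to $\partial X^\xi$, an extension of length exactly $\log\frac1\xi$ per side, and the hypothesis $\varepsilon\leq\log\frac1\xi$ guarantees this extension is long enough to make up the shortfall. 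So the intermediate horocycle of length~$1$ is essential, not an artifact, and the condition on $\varepsilon$ is a density condition on the final extension, not a cusp-fitting condition on $r_\varepsilon$.

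In short: keep the idea of opening the loop with orthogeodesic pieces, but replace the single $o_0$ by two one-sided orthogeodesics with one endpoint each on $\gamma_0$ (so the connecting algorithm applies), route them through the horocycle of length~$1$ rather than $\partial X^\xi$, and then extend the resulting $\smallO_1$ by $\log\frac1\xi$ on each side to reach $\partial X^\xi$, using $\varepsilon\leq\log\frac1\xi$ to absorb the (P2) shortfall. The length bookkeeping and the transfer of the $2\varepsilon$-density then proceed as you sketched, with the two auxiliary half-arcs together accounting for the passage from $N$ to $N+1$ in the final bound.
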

\begin{proof}
Let $P_0$ and $P_1$ be two arbitrary once-punctured polygons of the partition by $\gamma_0$ on $X$. Firstly, we will construct a doubly truncated orthogeodesic $\smallO_1$ with endpoints on the horocycles of length $1$ associated with the two polygons so that $\smallO_1$ contains $\{\zeta_i\}^N_{i=1}$ in its $\varepsilon$-neighborhood. We take the shortest one-sided orthogeodesic arc, denoted by $\zeta'_0$, oriented with the starting point on the horocycle of length $1$ of $P_0$ and the endpoint on $\gamma_0$. We take another shortest one-sided orthogeodesic arc, denoted by $\zeta'_{N+1}$, oriented with the starting point on $\gamma_0$ and the endpoint on the horocycle of length $1$ of $P_1$. For each $i\in \{1,2,...,N\}$, we orient $\zeta'_{i}$ arbitrarily. The new sequence $\{\zeta'_i\}^{N+1}_{i=0}$ is ordered linearly by its index.  We apply the connecting algorithm to this new sequence. Noting that $\zeta'_0$ is in the first step and $\zeta'_{N+1}$ is in the last step of the algorithm, one will obtain a doubly truncated orthogeodesic $\smallO_1$ as desired (see Figure \ref{ortho}).

Since $\smallO_1$ is an arc, it may not contain totally either $c_1$ or $c_N$ in its $2\varepsilon$-neighborhood. In this case, by applying Lemma \ref{epsilondistance} (P2), we only need to extend $\smallO_1$ by a small extra segment of length at most $\varepsilon$ in both directions. Note that, $\varepsilon \leq \log \frac{1}{\xi}$, and the distance between the horocycle of length $1$ and the horocycle of length $\xi$ is $\log \frac{1}{\xi}$, by extending $\smallO_1$ in both directions until it hits the boundary of $X^{\xi}$, we obtain $\smallO$ as desired.

\end{proof}
\begin{center}
\begin{minipage}{\linewidth}
\begin{center}
\includegraphics[width=0.4\linewidth]{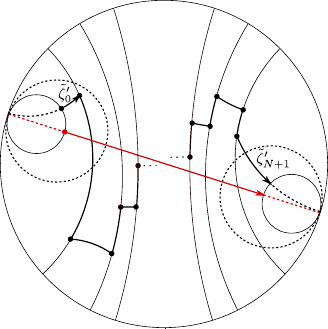}
\captionof{figure}{Lifting to $\mathbb{H}$.}
\label{ortho}
\end{center}
\end{minipage}
\end{center}


\section{Quantitative density on surface}
We now apply Theorem \ref{maintool} to prove results about quasi-dense geodesics.
\begin{thm}\label{delta2}
For all $X\in \mathcal{M}_{g,n}$ there exists a constant $C_X>0$ such that for all $0<\xi \leq 1$ and all $0<\varepsilon \leq 2$ there exists a closed geodesic $\gamma_\varepsilon$ that is $\varepsilon$-dense on $X^{\xi}$ and such that 
\begin{center}
$\ell(\gamma_\varepsilon) \leq C_{X} \dfrac{1}{\varepsilon}\bigg(\log \dfrac{1}{\varepsilon}+\log \dfrac{1}{\xi}\bigg)$.
\end{center}
\end{thm}
\begin{proof}
On $\mathbb{H}$, there is a fundamental polygon $F$ whose boundary consists of $4g+2n$ paired geodesic segments (or rays) which, when glued in pairs, turn the polygon into $X$. This polygon has $n$ ideal vertices and $4g+n$ ordinary vertices. See Figure \ref{g=1} for an example.
\begin{center}
\begin{minipage}{\linewidth}
\begin{center}
\includegraphics[width=0.4\linewidth]{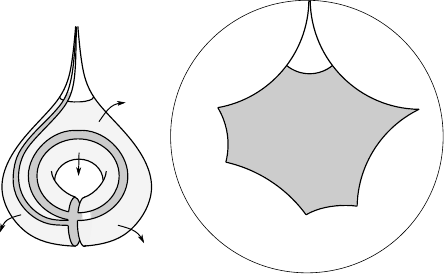}
\captionof{figure}{An example when g=1, n=1.}
\label{g=1}
\end{center}
\end{minipage}
\end{center}
 Since $X^{2}\subset X^{\xi} \subset  X$, there is a fundamental polygon of $X^{\xi}$ in $F$, say $F^{\xi}$, and a fundamental polygon of $X^2$ in $F^{\xi}$, say $F^2$. We note that the boundary of $F^2$ consists of $n$ horocyclic segments of length $2$ and $4g+2n$ geodesic segments. By replacing each horocyclic segment with a geodesic segment of length $2\arcsinh1$ with the same endpoints, we obtain the convex hull of $F^2$, denoted by $CH(F^2)$. We denote by $P_X$ the perimeter of $CH(F^2)$, and note that this value depends only on $X$.  On an edge of $CH(F^2)$, we choose the first point at a vertex, then choose the next points such that the segment on the boundary connecting two consecutive points is of length $\varepsilon$. If the length of the segment connecting the last point and the remaining vertex of the same edge is less than $\varepsilon$, that vertex will be chosen as the first point of the next edge, we then continue the choosing process. Eventually, we have chosen at most 
\begin{center}
$\dfrac{P_X}{\varepsilon}+4g+2n$
\end{center}
points on the boundary of $CH(F^2)$. See Figure \ref{cover}.
\begin{center}
\begin{minipage}{\linewidth}
\begin{center}
\includegraphics[width=0.4\linewidth]{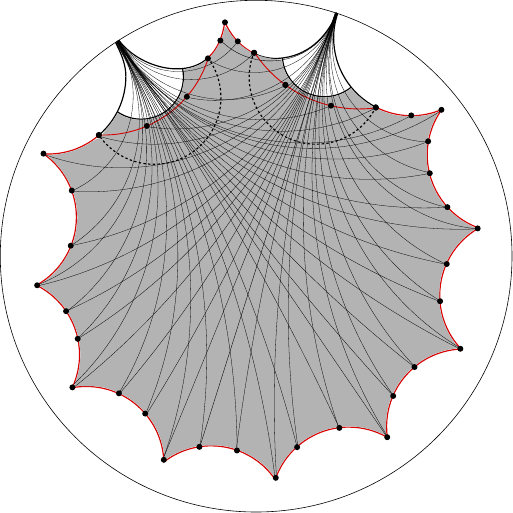}
\captionof{figure}{An example when g=2, n=2. Note that $CH(F^2)$ is the polygon with red edges.}
\label{cover}
\end{center}
\end{minipage}
\end{center}
Now we connect each ideal vertex to the points on the boundary of $CH(F^2)$. Since $CH(F^2)$ is convex, the parts of those geodesic rays in $F^{\xi}$ are exactly one-sided orthogeodesic segments. By gluing back paired geodesic segments of $F$ in pairs, these segments become one-sided orthogeodesic arcs on $X$ and we have at most
\begin{center}
$n\bigg(\dfrac{P_X}{\varepsilon}+4g+2n\bigg)$
\end{center}
one-sided orthogeodesic arcs on $X$. By construction, each segment is of length at most 
\begin{center}
$\dfrac{P_X}{2}+\log\bigg(\dfrac{2}{\xi}\bigg)$.
\end{center}
Moreover, the collection of the one-sided orthogeodesic arcs is $\frac{\varepsilon}{2}$-dense on $X^\xi$.
Thus by applying Theorem \ref{maintool} to this collection, we obtain the closed geodesic $\gamma_\varepsilon$ containing every arcs in its $\frac{\varepsilon}{2}$ neighborhood where length satisfies

\begin{equation}\label{x}
\ell(\gamma_\varepsilon) \leq n\bigg(\dfrac{P_X}{\varepsilon}+4g+2n\bigg)\bigg(K_X+\dfrac{P_X}{2}+\log\dfrac{2}{\xi}+10\log\dfrac{2}{\varepsilon}+8\log\dfrac{1}{\xi}\bigg).
\end{equation}
Then by manipulating the right hand of Inequality \ref{x}, we obtain a constant $C_X$ depending only on $X$ so that:
\begin{center}
$\ell(\gamma_\varepsilon) \leq C_X \dfrac{1}{\varepsilon}\bigg( \log\dfrac{1}{\varepsilon}+\log\dfrac{1}{\xi}\bigg).$
\end{center}
\end{proof}
By using the same collection of geodesic segments as in Theorem \ref{delta2}, and by the connecting algorithm in the proof of Theorem \ref{maintool2}, we also obtain the following result:
\begin{thm}\label{delta3}
Let $X\in \mathcal{M}_{g,n}$, there exists a constant $D_{X}>0$ such that for all $0<\xi \leq 1$ and all $0<\varepsilon \leq \min\{2\log\frac{1}{\xi},2\}$ there exists a doubly truncated orthogeodesic $\smallO_\varepsilon$ that is $\varepsilon$-dense on $X^\xi$ and such that 
\begin{center}
$\ell(\smallO_\varepsilon) \leq D_{X} \dfrac{1}{\varepsilon}\bigg(\log \dfrac{1}{\varepsilon}+\log \dfrac{1}{\xi}\bigg)$.
\end{center}
\end{thm}
We end this section with a corollary of Theorem \ref{delta2} where we apply Theorem 1.2 \cite{basmajian2013universal} to obtain an upper bound on the number of self-intersections of the quasi $\varepsilon$-dense closed geodesic. 
\begin{cor}\label{intersection}
Let $X\in \mathcal{M}_{g,n}$, there exists a constant $C_{X}>0$ such that for all $0<\xi \leq 1$ and all $0<\varepsilon \leq 2$ there exists a closed geodesic $\gamma_\varepsilon$ that is $\varepsilon$-dense on $X^\xi$ and such that 
\begin{center}
$2i(\gamma_\varepsilon,\gamma_\varepsilon) \leq C_{X}^{\frac{1}{\varepsilon}\big(\log \frac{1}{\varepsilon}+\log \frac{1}{\xi}\big)}$.
\end{center}
\end{cor}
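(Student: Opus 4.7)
The plan is to combine Theorem A (Theorem \ref{delta2}) with an external exponential upper bound on the self-intersection number of a closed geodesic in terms of its length, as provided by Theorem 1.2 of \cite{6}. The argument is a direct composition: apply Theorem A to extract an $\varepsilon$-dense closed geodesic $\gamma_\varepsilon$ with the stated length bound, and then feed that length bound into the self-intersection estimate.

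Concretely, first I would invoke Theorem \ref{delta2} with the given $X$, $\xi$, and $\varepsilon$ to produce a closed geodesic $\gamma_\varepsilon$ that is $\varepsilon$-dense on $X^\xi$ and whose length satisfies $\ell(\gamma_\varepsilon) \leq C^{(A)}_X \frac{1}{\varepsilon}\bigl(\log\frac{1}{\varepsilon} + \log\frac{1}{\xi}\bigr)$, where $C^{(A)}_X > 0$ is the constant produced by Theorem A. Next, Theorem 1.2 of \cite{6} furnishes a constant $A_X > 0$, depending only on $X$, such that every closed geodesic $\gamma$ on $X$ satisfies $2\, i(\gamma,\gamma) \leq A_X^{\ell(\gamma)}$ (any sub-exponential prefactor being absorbed into $A_X$ at the cost of slightly enlarging it). Applying this to $\gamma_\varepsilon$ and substituting the length bound yields
\[
2i(\gamma_\varepsilon,\gamma_\varepsilon) \;\leq\; A_X^{\ell(\gamma_\varepsilon)} \;\leq\; A_X^{C^{(A)}_X \frac{1}{\varepsilon}(\log \frac{1}{\varepsilon} + \log \frac{1}{\xi})} \;=\; \bigl(A_X^{C^{(A)}_X}\bigr)^{\frac{1}{\varepsilon}(\log \frac{1}{\varepsilon} + \log \frac{1}{\xi})},
\]
and setting $C_X := A_X^{C^{(A)}_X}$, a constant depending only on $X$, gives the claimed bound.

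Since the argument is merely the composition of two pre-existing estimates, there is no serious conceptual obstacle. The only point requiring care is the precise form of Theorem 1.2 of \cite{6}: if the bound there carries a polynomial-in-$\ell(\gamma)$ prefactor, it can be absorbed into the base of the exponential by the elementary fact that for any polynomial $p$ and any $\kappa > 1$ one has $p(\ell) \leq \kappa^\ell$ for all $\ell$ sufficiently large. The complementary (bounded) small-$\ell$ regime contributes only a finite multiplicative constant, which can likewise be absorbed into $C_X$ without affecting the form of the right-hand side.
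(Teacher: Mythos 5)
Your proof is correct and takes the same route the paper intends: the text explicitly says the corollary follows by applying Theorem 1.2 of \cite{6} to the $\varepsilon$-dense closed geodesic produced by Theorem~A, which is exactly the two-step composition you carry out. Your remark about absorbing any sub-exponential prefactor into the base of the exponential is the right level of care for this step.
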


\paragraph*{Acknowledgments.} This work was done by the author during his Ph.D. at the University of Luxembourg from 2018-2022 funded by the Luxembourg National Research Fund (FNR) PRIDE15/10949314/GSM. The author is very grateful to his thesis advisor Hugo Parlier for his thorough reading of the manuscript and many helpful conversations. The author also thanks Binbin Xu for useful discussions and the referee for several constructive comments that helped improve the article.

\bibliographystyle{amsplain}
\bibliography{ref}

  \small{
 {\it Address:}\\Department of Mathematics, University of Luxembourg, Esch-sur-Alzette, Luxembourg \\ \& Institute of Mathematics, Vietnam Academy of Science and Technology, Vietnam \\
{\it Email:}\\dnminh@math.ac.vn}

\end{document}